\numberwithin{equation}{section}
\def\cD{\mathcal{D}}
\def\cF{\mathcal{F}}
\def\cG{\mathcal{G}}
\def\cH{\mathcal{H}}
\def\cO{\mathcal{O}}
\def\cT{\mathcal{T}}
\def\bC{\mathbb{C}}
\def\bQ{\mathbb{Q}}
\newcommand{\bbm}{\begin{bmatrix}}
\newcommand{\ebm}{\end{bmatrix}}
\theoremstyle{plain}
\newtheorem{theorem}{Theorem}[section]
\newtheorem{conjecture}[theorem]{Conjecture}
\newtheorem{lemma}[theorem]{Lemma}
\newtheorem{proposition}[theorem]{Proposition}
\newtheorem{corollary}[theorem]{Corollary}
\theoremstyle{definition}
\newtheorem{definition}[theorem]{Definition}
\newtheorem{example}[theorem]{Example}
\theoremstyle{remark}
\newtheorem{remark}[theorem]{Remark}
\title[Numerical Reduction for Adjoint Singularities]
{Numerical Reduction and Sharp Thresholds for Adjoint Singularities of Foliated Surfaces}
\author{Shi Xu}
\address{Yau Mathematical Sciences Center, Tsinghua University, Beijing 100084, China}
\email{shixumath@163.com}
\subjclass[2020]{Primary 14J29; Secondary 14B05, 14E30, 32S65.}
\keywords{Foliated surfaces; adjoint foliated structures; adjoint singularities; log canonical singularities; numerical reduction; sharp thresholds.}
\date{}
\begin{document}

\begin{abstract}
Let \((X,\mathcal F)\) be a foliated surface over the complex numbers.  We
study the variation of \(\epsilon\)-adjoint singularities associated with the
adjoint divisor
\[
K_{\mathcal F}+\epsilon K_X,\qquad \epsilon>0.
\]

Using a numerical reduction procedure for negative definite exceptional
configurations, we classify \(\epsilon\)-adjoint log canonical singularities for
\(0<\epsilon<1/3\).  The reduction detects negative vertices, peels off the
special chains generated by them, and reduces the classification to the residual
intersections left after peeling.  In this form, the first stability threshold is
\(\epsilon=1/5\): for \(0<\epsilon<1/5\), every \(\epsilon\)-adjoint log canonical
singularity is foliated log canonical, while at \(\epsilon=1/5\) a new boundary
configuration appears.

Imposing the stronger \(\epsilon\)-adjoint canonical condition gives a second
classification below \(\epsilon=1/4\).  The same residual mechanism detects the
wall \(1/4\), which gives the sharp canonical-to-log-canonical stability
interval.  Both thresholds are sharp and are realized by explicit examples.

As an application, we describe the negative part of the Zariski decomposition of
\(K_{\mathcal F}+\epsilon K_X\) below the wall \(1/4\), and obtain the corresponding
stability range for the adjoint minimal model program.
\end{abstract}

\maketitle 

\setcounter{tocdepth}{2}  
\tableofcontents

\section{Introduction}

Throughout this paper, we work over the complex numbers \(\mathbb C\).

A recurring problem in surface birational geometry is to understand how
singularity conditions vary under perturbations of canonical divisors.  On an
ordinary surface, such questions are controlled by the intersection theory of
negative definite exceptional configurations.  For a foliated surface
\((X,\mathcal F)\), there are two canonical classes in play: the foliated
canonical class \(K_{\mathcal F}\), which governs the intrinsic birational
geometry of the foliation, and the ambient canonical class \(K_X\), which
governs the geometry of the underlying surface.  The adjoint divisors
\[
        K_{\mathcal F}+\epsilon K_X,\qquad \epsilon>0,
\]
therefore provide a natural way to study how the intrinsic and ambient
geometries interact.

Adjoint foliated structures were introduced by Pereira and Svaldi \cite{PS19} and were
further developed by Spicer and Svaldi for rank-one foliations on surfaces \cite{SS23}, and by
Cascini--Han--Liu--Meng--Spicer--Svaldi--Xie for algebraically integrable
foliations in higher dimensions \cite{CHL+24}.  From this point of view,
\(K_{\mathcal F}+\epsilon K_X\) is a small ambient perturbation of
\(K_{\mathcal F}\).  Such perturbations are not merely formal.  They arise
naturally in the birational theory of foliations, where the intrinsic canonical
class may exhibit pathologies, for example the possible failure of finite
generation of the canonical ring even when \(K_{\mathcal F}\) is big and nef \cite{MMcQ08},
and the failure of effective birationality for rank-one foliations \cite{Lu25}.

Once \(K_{\mathcal F}\) is replaced by \(K_{\mathcal F}+\epsilon K_X\), the
discrepancy data also change.  This leads to a basic stability problem: for
sufficiently small \(\epsilon>0\), do the singularities defined by the adjoint
divisor recover the usual singularities of the foliation, or do genuinely new
singularities already appear?

The answer is subtle.  Canonicity is not stable under arbitrarily small adjoint
perturbations: an \(\epsilon\)-adjoint canonical singularity need not be
canonical as a foliated singularity, no matter how small \(\epsilon>0\) is; see Example~\ref{ex:epsilon-ad-can-NOT-can}.
Thus the relevant question is not whether all usual singularity classes are
preserved, but rather which weaker stability statements survive and where the
first numerical walls occur.

This stability problem may be viewed as a local form of a general issue in
adjoint birational geometry: a small ambient perturbation may improve positivity,
but it also changes the discrepancy function.  Thus one needs an effective way
to measure how much of the usual foliated singularity theory survives under
\[
K_{\mathcal F}\longmapsto K_{\mathcal F}+\epsilon K_X .
\]
On surfaces, we show that this problem is governed by a numerical reduction on
negative definite exceptional configurations.

The main results of this paper identify two sharp walls.  The first one occurs
at
\[
\epsilon=\frac15.
\]
For \(0<\epsilon<1/5\), every \(\epsilon\)-adjoint log canonical singularity is
foliated log canonical.  
At \(\epsilon=1/5\), a new boundary configuration enters the admissible region,
and this stability statement fails.
The second wall occurs
at
\[
\epsilon=\frac14.
\]
If one imposes the stronger \(\epsilon\)-adjoint canonical condition, then
\(0<\epsilon<1/4\) is the sharp interval in which adjoint canonicity still
forces foliated log canonicity.

The classification leading to these walls should not be viewed as a direct
enumeration of dual graphs.  The main point is a numerical reduction procedure
for negative definite configurations.  Let \(D\) be a divisor on a smooth
surface and let \(E=\sum_i E_i\) be a negative definite configuration.  We
define the numerical projection \(M(D,E)\), supported on \(E\), by
\[
M(D,E)\cdot E_i=D\cdot E_i\quad\text{for all }i.
\]
In the situations considered here, the relevant discrepancy condition is
equivalent to the effectivity, or strict effectivity, of this projection.  The
reduction then proceeds by detecting components with negative \(D\)-intersection,
extracting the special chains forced by them, and peeling off their numerical
contributions.  What remains is a residual numerical problem on the unpeeled
part of the configuration.

Thus the walls are not found by a search through graphs.  They are the first
values of the parameter \(\epsilon\) at which one of the residual intersections
left after peeling becomes zero.  
In the \(\epsilon\)-adjoint log canonical case, the first such residual
intersection vanishes at \(\epsilon=1/5\).
In the
adjoint canonical case, the stronger effectivity condition filters the log
canonical list further and produces the wall \(\epsilon=1/4\).

The reduction procedure used in the paper is intersection-theoretic in nature.
It is formulated for negative definite configurations on smooth surfaces and
does not depend on the special features of the adjoint divisor until the local
numerical input is inserted.  
In this sense, the adjoint foliated problem provides a sharp instance of this
surface-theoretic mechanism: singularity stability is reduced to the behavior of
numerical projections under peeling, and 
the critical constants are detected by the first vanishing residual intersections.

We first state the log canonical classification in a compressed form.  
The detailed list, including the precise boundary configuration and the determinant data of the attached chains, 
is given in Theorem~\ref{thm:adjoint-lc-classification}.  
The compressed statement emphasizes the structural outcome of the reduction:
below the first wall the admissible configurations are exactly the usual foliated
log canonical ones, while at the wall a new adjoint boundary type enters the list.

\begin{theorem}[Adjoint log canonical classification]\label{thm:intro-main-lc}
Fix \(\epsilon\in(0,1/3)\).  Let \((Y,\cG,p)\) be a germ of a foliated surface
such that \(p\) is \(\epsilon\)-adjoint log canonical.  Let
\[
\pi:(X,\cF)\to(Y,\cG)
\]
be the minimal resolution over $(Y,\cG,p)$, and write \(E\) for the exceptional divisor.  Then
\(E\) belongs to one of the following structural types:
\begin{enumerate}
\item[\rm(I)] foliated canonical configurations;
\item[\rm(II)] foliated log canonical non-canonical configurations;
\item[\rm(III)] boundary configurations.
\end{enumerate}

Types {\rm (I)} and {\rm (II)} are the usual foliated log canonical types, while
Type {\rm (III)} is new in the adjoint setting and is not foliated log canonical.
It is a boundary configuration centered at a smooth rational \(\cF\)-invariant
curve \(C\) with \({\rm Z}(\cF,C)=3\).  It first occurs at
\(\epsilon=1/5\), and cannot occur for \(\epsilon<1/5\).
\end{theorem}

The precise form of the boundary configuration, as well as the detailed
classification, is given in Theorem~\ref{thm:adjoint-lc-classification}.  Its
proof is carried out in Section~\ref{sec:adjoint-lc-reduction}, using the
numerical reduction developed in Section~\ref{sec:numerical-reduction}.

\subsection{The log canonical wall \(\epsilon=1/5\)}

Theorem~\ref{thm:intro-main-lc} shows that the first new configuration in the
\(\epsilon\)-adjoint log canonical setting appears at \(\epsilon=1/5\).  Below
this value, adjoint log canonicity forces ordinary foliated log canonicity.

\begin{proposition}\label{prop:intro-lc-stability}
Fix \(\epsilon\in(0,1/5)\).  Suppose that \(p\) is an
\(\epsilon\)-adjoint log canonical singularity of \((X,\cF)\).  Then \(p\) is
log canonical both as a foliated singularity and as a surface singularity.
Moreover, if \(p\) is \(\epsilon\)-adjoint klt, then \(p\) is klt as a
singularity of \(X\).
\end{proposition}

This is proved as a consequence of the detailed classification in
Theorem~\ref{thm:adjoint-lc-classification}, together with the classification of
log canonical surface singularities recalled in Theorem~\ref{thm:lc-surface}.
The threshold \(\epsilon=1/5\) is sharp: at this value, an \(\epsilon\)-adjoint log
canonical singularity may fail to be log canonical as a foliated singularity; see
Example~\ref{ex:e=1/4}.  The numerical origin of this wall is explained in
Remark~\ref{rem:lc-wall-origin}.

\begin{remark}
Proposition~\ref{prop:intro-lc-stability} extends previous results:
\cite[Lemma~2.19]{SS23} treats the case where the underlying surface is smooth,
and \cite[Lemma~2.13]{Vas25} considers the case where the underlying surface
singularity is klt.
\end{remark}

\subsection{The canonical wall \(\epsilon=1/4\)}

The adjoint canonical condition imposes a stronger filter on the log canonical
list.  The boundary configurations which enter at the log canonical wall
\(\epsilon=1/5\) are ruled out under the canonical condition as long as
\(\epsilon<1/4\).  Thus \(\epsilon=1/4\) is the first wall in the adjoint
canonical setting.

This gives a second, smaller classification.  It also provides the local input
for the structure of the adjoint negative part and for the adjoint minimal model
program.
\begin{theorem}[Adjoint canonical classification below \(1/4\)]
\label{thm:intro-main-can}
Fix \(\epsilon\in(0,1/4)\).  Let \((Y,\cG,p)\) be a germ of a foliated surface
such that \(p\) is \(\epsilon\)-adjoint canonical.  Let
\[
\pi:(X,\cF)\to(Y,\cG)
\]
be the minimal resolution over $(Y,\cG,p)$, with exceptional divisor \(E\).  Then \(E\) is one
of the following:
\begin{enumerate}
\item an \(\cF\)-chain;
\item a chain of \((-2)\)-\(\cF\)-curves;
\item  the bad-tail chain, with all components of self-intersection \(-2\);
\item an \(\cF\)-dihedral fork, with all components of self-intersection \(-2\);
\item a non-trivial \(\mathcal F\)-star chain centered at a smooth rational
non-\(\mathcal F\)-invariant curve \(C\) satisfying
\(\operatorname{tang}(\mathcal F,C)=0\)
and \(C^2=-1\).
\end{enumerate}
\end{theorem}

The detailed form is stated and proved in
Theorem~\ref{thm:adjoint-canonical-classification}, after applying the
\(K_{\cF}+\epsilon K_X\)-effectivity filter to
Theorem~\ref{thm:adjoint-lc-classification}.  
The only configuration in the list
which need not be foliated canonical is the non-trivial \(\cF\)-star chain in item {\rm(5)};
see Definition~\ref{def:F-star-graph}.

As a consequence, we obtain the following stability statement.

\begin{proposition}\label{prop:intro-can-stability}
Fix \(\epsilon\in(0,1/4)\).  Suppose that \(p\) is an
\(\epsilon\)-adjoint canonical singularity of \((X,\cF)\).  Then \(p\) is log
canonical as a foliated singularity and klt as a surface singularity.
\end{proposition}

The proof is given in Corollary~\ref{cor:adjoint-canonical-stability}.
The numerical origin of the wall
\(1/4\) is explained in Remark~\ref{rem:canonical-wall-origin}; its sharpness is
realized in Example~\ref{ex:e=1/4}.

The same wall also controls the negative part in the Zariski decomposition of
the adjoint divisor \(K_{\mathcal F}+\epsilon K_X\).  At the adjoint terminal
level, the local classification gives an adjoint analogue of McQuillan's
description of the negative part of \(K_{\mathcal F}\): below the wall \(1/4\),
the classical \(\cF\)-chain picture is preserved, with exactly one genuinely
adjoint additional block.

\begin{proposition}[Adjoint negative part below \(1/4\)] \label{prop:intro-adjoint-negative-part} 
Let \((X,\cF)\) be a relatively minimal foliated surface in the sense of
Definition~\ref{def:relatively-minimal}.
Fix $\epsilon\in(0,\frac14)$.
Assume that \(K_{\cF}+\epsilon K_X\) is pseudo-effective, and write its Zariski decomposition as 
\[ 
K_{\cF}+\epsilon K_X=P_\epsilon+N_\epsilon.
\] 

Then every connected component of \(\operatorname{Supp}N_\epsilon\) is either an \(\cF\)-chain 
or a non-trivial \(\cF\)-star chain centered at a smooth rational non-\(\cF\)-invariant curve \(C\) satisfying 
\(\operatorname{tang}(\cF,C)=0\) and \(C^2=-1\). 
\end{proposition}

This is proved in Proposition~\ref{prop:adjoint-negative-part}. 
The \(\cF\)-star-chain case is genuinely adjoint and cannot be excluded
uniformly in \(\epsilon\); see Example~\ref{ex:epsilon-ad-can-NOT-can}.
The corresponding adjoint minimal model program and adjoint canonical model are
discussed in Corollaries~\ref{cor:adjoint-MMP} and
\ref{coro:adjoint-canonical-model}.

\subsection{Interpolated lc thresholds and the \(1/6\)-gap}

The wall \(1/5\) also admits an interpretation in terms of the interpolated lc
threshold introduced by McKernan \cite{McK22}; see also
\cite[Appendix~A]{CHL+24}.

For a foliated surface \((X,\cF)\), define
\[
t_0:=\sup\left\{\,t\in[0,1]\ \middle|\ 
tK_{\cF}+(1-t)K_X \text{ is log canonical}
\,\right\}.
\]
The divisor \(K_{\cF}+\epsilon K_X\) corresponds to the above expression through
the relation
\[
\epsilon=\frac{1-t}{t},\qquad t>0.
\]
Applying Proposition~\ref{prop:intro-lc-stability} through this change of
variables gives the following gap statement.

\begin{proposition}\label{prop:gap}
The interpolated lc threshold \(t_0\) satisfies
\[
t_0=1
\qquad\text{or}\qquad
t_0\le \frac56.
\]
\end{proposition}

More generally, the following \(1\)-gap conjecture for interpolated lc thresholds
was formulated in \cite[Conjecture~A.6]{CHL+24}.

\begin{conjecture}[\(1\)-gap for interpolated lc thresholds]
Let \(d\) be a positive integer.  Then there exists a positive real number
\(\tau=\tau(d)\) satisfying the following property.

Let \((X,\cF,t)\) be an adjoint foliated structure of dimension \(d\).  Assume
that \(tK_{\cF}+(1-t)K_X\) is log canonical for some \(t>1-\tau\) and that
\(K_{\cF}\) is \(\mathbb Q\)-Cartier.  Then \(\cF\) is log canonical.
\end{conjecture}

In dimension \(2\), our results show that the optimal value is
\[
\tau(2)=\frac16.
\]

\paragraph{\bf Organization of the paper.}
Section~\ref{sec:preliminaries} reviews background on foliations, adjoint singularities, and the
classifications of log canonical surface and foliated surface singularities.
Section~\ref{sec:numerical-reduction} develops the numerical projection and peeling tools for negative
definite configurations.  
Section~\ref{sec:adjoint-lc-reduction} runs the reduction algorithm to classify
\(\epsilon\)-adjoint log canonical singularities for
\(\epsilon\in(0,1/3)\) and identifies the first wall \(1/5\).
Section~\ref{sec:adjoint-canonical} imposes the adjoint canonical condition on this list, determines the
wall \(1/4\), and studies the corresponding adjoint negative part and adjoint MMP.
Section~\ref{sec:examples} presents examples illustrating the sharpness of both thresholds.

\section{Preliminaries}\label{sec:preliminaries}

\subsection{Foliations, singularities, and resolutions}

\subsubsection{Definition of foliations and singularities}

A \emph{foliation} \(\mathcal F\) on a normal surface \(X\) is a rank-one saturated
subsheaf \(T_{\mathcal F}\subset T_X\). Since \(T_{\mathcal F}\) is reflexive, it
determines a Weil divisor \(K_{\mathcal F}\) by
\(\mathcal O_X(-K_{\mathcal F})\simeq T_{\mathcal F}\).

A point $p\in X$ is called a \emph{singularity} of the foliation $\cF$ if either a singular point of $X$ or a point at which the quotient $T_X/T_{\cF}$ is not locally free. 
We say $p$ is a \emph{regular point} of $\cF$ if $p$ is a smooth point of $X$ and  $T_X/T_{\cF}$ is locally free at $p$.
\smallskip

Let $p$ be a singularity of $\cF$, at which $X$ is smooth. Locally, $\cF$ is defined by a vector field
  \begin{equation}
  \nu=A(x,y)\,\partial_x+B(x,y)\,\partial_y,
  \end{equation}
  where $p=(0,0)$. 
  The two eigenvalues $\lambda_1,\lambda_2$ of the linear part $(D\nu)(p)$ of $\nu$ at $p$ are well defined.
  \begin{definition}
  Let $p$ be a singularity of $\cF$ (at which $X$ is smooth) and let $\lambda_1,\lambda_2$ be as above.
  \begin{enumerate}
  \item The singularity $p$ is called \emph{non-degenerate} if the two eigenvalues $\lambda_1,\lambda_2$ are both nonzero.
  \item The singularity $p$ is called \emph{reduced} if one of the two eigenvalues, say $\lambda_2$, is nonzero and the quotient 
  $\lambda=\lambda_1/\lambda_2$ is not a positive rational number. 
  In particular, if $\lambda=0$, we call $p$ a \emph{saddle-node}.
  \end{enumerate}

  A foliation $\cF$ is said to be \emph{reduced} if any singularity of $\cF$ is reduced.
\end{definition}

By Seidenberg's theorem \cite{Sei68,Bru15}, after a sequence of blow-ups every
foliation on a surface has only reduced singularities.

\subsubsection{ Minimal resolutions.}
\begin{definition}
    Let $(X, \mathcal{F})$ be a foliated surface. A curve $C\subset X$ is said to be \emph{$\mathcal{F}$-exceptional} if the following conditions are satisfied:
    \begin{enumerate}
      \item $C$ is a smooth rational curve with self-intersection $-1$;
      \item the contraction of $C$ to a point $p$ yields a foliated surface $(X', \mathcal{F}')$, where $p$ is either a \emph{regular point} or a \emph{reduced singularity} of $\mathcal{F}'$.
    \end{enumerate}
\end{definition}

\begin{definition}\label{def:min-resolution}
Let $(X,\cF)$ be a foliated surface and let $p$ be a singularity of $(X,\cF)$.
A \emph{resolution of $(X,\cF,p)$} is a birational morphism
\[
\pi:(Y,\cG)\longrightarrow (X,\cF)
\]
such that \(Y-\pi^{-1}(p)\cong X-p\), and every point
\(q\in \pi^{-1}(p)\) is a smooth point of \(Y\) at which the induced foliation
\(\cG:=\pi^*\cF\) is either reduced or regular. We say that \(\pi\) is
\emph{minimal} if no \(\cG\)-exceptional curve is contained in \(\pi^{-1}(p)\).
\end{definition}

\subsection{Index formulas for invariant and non-invariant curves}

Let $(X,\cF)$ be a foliated surface with $X$ smooth.

A reduced curve $C\subseteq X$ is said to be \emph{$\cF$-invariant} if the inclusion
$T_{\cF}|_C \rightarrow T_X|_C$ factors through $T_C$, where $T_C$ is the tangent bundle of $C$.

\subsubsection{Non-invariant curves}
For a non-$\cF$-invariant curve  $C$ and $p\in C$, we can define the \emph{tangency order}  of $\cF$ along $C$ at $p$ to be
$${\rm tang} (\cF,C,p):=\dim_{\bC}\frac{\cO_p}{\langle f,\nu(f)\rangle},$$   
where $\nu$  is the local  generator of $\cF$ at $p$ and $f=0$ is the local equation of $C$.
We define  ${\rm tang}(\cF,C):=\sum_{p\in C}{\rm tang}(\cF,C,p)$. 
    
\begin{proposition}\label{prop:tang-index}
  We have
  \begin{equation}\label{equ:tang}
  {\rm tang}(\cF,C)=K_{\cF}\cdot C+C^2\, (\geq0).
  \end{equation} 
\end{proposition}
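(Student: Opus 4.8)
The plan is to reinterpret the tangency order as the vanishing of a section of a line bundle on $C$, and then invoke the principle that the degree of a line bundle on a compact curve equals the length of the zero scheme of any section that is generically nonzero. Since $X$ is smooth, $C$ is a Cartier divisor, hence a local complete intersection, so its normal sheaf $N_{C/X}=\cO_X(C)|_C$ is a line bundle on $C$ (even where $C$ is singular). Dualizing the conormal map $\cI/\cI^2\to\Omega_X|_C$, $\bar f\mapsto df|_C$, yields a natural $\cO_C$-linear map $T_X|_C\to N_{C/X}$ that sends a vector field $\nu$ to the class of $\nu(f)$ modulo $f$, where $f$ is a local equation of $C$.

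First I would compose the inclusion $T_{\cF}|_C\hookrightarrow T_X|_C$ with this map to obtain an $\cO_C$-linear map $\sigma\colon T_{\cF}|_C\to N_{C/X}$. Since $T_{\cF}=\cO_X(-K_{\cF})$ and $N_{C/X}=\cO_X(C)|_C$ are both line bundles on $C$, the map $\sigma$ is a global section of $\mathcal{H}om_{\cO_C}(T_{\cF}|_C,N_{C/X})\cong\cO_X(K_{\cF}+C)|_C$. Working at a point $p\in C$ with a local generator $\nu$ of $\cF$, and trivializing $N_{C/X}$ by the dual of $\bar f$, the section $\sigma$ is represented by $\nu(f)\bmod f\in\cO_{C,p}$; hence the length of the cokernel of $\sigma$ at $p$ is exactly $\dim_{\bC}\cO_{C,p}/(\nu(f))=\dim_{\bC}\cO_p/\langle f,\nu(f)\rangle={\rm tang}(\cF,C,p)$.

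The hypothesis that $C$ is non-$\cF$-invariant says precisely that $\nu$ is not everywhere tangent to $C$, equivalently that $\sigma$ does not vanish identically on any irreducible component of $C$; its zero scheme is therefore $0$-dimensional of finite total length. Summing the local contributions and using the degree-equals-length-of-zero-scheme identity for a line bundle on a compact curve, I obtain
\[
{\rm tang}(\cF,C)=\sum_{p\in C}{\rm tang}(\cF,C,p)=\deg_C\cO_X(K_{\cF}+C)|_C=(K_{\cF}+C)\cdot C=K_{\cF}\cdot C+C^2 .
\]
The inequality ${\rm tang}(\cF,C)\ge 0$ is then immediate, since each ${\rm tang}(\cF,C,p)=\dim_{\bC}\cO_p/\langle f,\nu(f)\rangle$ is a non-negative integer.

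The main obstacle is bookkeeping at the singular points of $C$ rather than any deep input: one must confirm that $N_{C/X}$ is genuinely a line bundle there (which follows from $C$ being Cartier in the smooth surface $X$, so that the dualized conormal map is defined $\cO_C$-linearly even though it fails to be surjective at those points), and that the local length of the cokernel of $\sigma$ correctly computes ${\rm tang}(\cF,C,p)$. I would isolate these verifications and, if $C$ is reducible, treat each irreducible component separately, using that the non-invariance hypothesis guarantees $\sigma$ is generically nonzero on every component so that the global degree identity applies.
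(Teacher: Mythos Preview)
Your argument is correct and is precisely the standard proof: the paper does not give its own argument but simply cites \cite[Proposition~2.2]{Bru15}, and Brunella's proof there is exactly the one you have written---interpret the composition $T_{\cF}|_C\hookrightarrow T_X|_C\to N_{C/X}$ as a section of $\cO_C(K_{\cF}+C)$, identify its zero scheme locally with $\cO_p/\langle f,\nu(f)\rangle$, and take degrees. Your care about singular points of $C$ and reducible $C$ is appropriate but not an obstruction; everything goes through as you describe.
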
    
\begin{proof}
See \cite[Proposition 2.2]{Bru15}.
\end{proof} 

\begin{remark}
${\rm tang}(\cF,C)=0$ implies that $C$ is smooth.
\end{remark}

\subsubsection{Invariant curves}
Let $C$ be an $\cF$-invariant curve on $X$, and let $p \in C$ be a point.
In a neighborhood of $p$, the foliation $\cF$ is defined by a 1-form $\omega$, and $C$ is locally given by the equation $f = 0$.
Since $C$ is $\cF$-invariant, we may write
$$g\omega=h df+f\eta,$$
where \(\eta\) is a holomorphic \(1\)-form and \(g,h\) are holomorphic functions
defined around \(p\), with \(h\) and \(f\) coprime.
We define 
\[
{\rm Z}(\cF,C,p):=\text{vanishing order of $\left.\frac{h}{g}\right|_C$ at $p$},\qquad
{\rm CS}(\cF,C,p):={\rm Res}_{p}\left\{\left.-\frac{\eta}{h}\right|_C\right\}.
\]
By definition, both ${\rm Z}(\cF,C,p)$ and ${\rm CS}(\cF,C,p)$ vanish if $p$ is not a singularity of $\cF$.
If $p$ is a reduced singularity of $\cF$, then ${\rm Z}(\cF,C,p)\geq0$.

Let
\({\rm Z}(\mathcal F,C):=\sum_{p\in C}Z(\mathcal F,C,p)\) and
\({\rm CS}(\mathcal F,C):=\sum_{p\in C}CS(\mathcal F,C,p)\).

\begin{proposition}\label{prop:Z-index-and-CS-index}
Let $C$ be an $\cF$-invariant curve on $X$. Then
\[
{\rm Z}(\cF,C)=K_{\cF}\cdot C+2-2p_a(C),\qquad
{\rm CS}(\cF,C)=C^2,
\]
where $p_a(C)$ denotes the arithmetic genus of $C$. The second equality above is called the \emph{Camacho-Sad formula}.
\end{proposition}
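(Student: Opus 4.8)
The plan is to obtain both identities at once from a single patching argument applied to the local relation $g\omega=h\,df+f\eta$, following the classical proof of the Camacho--Sad theorem. I would first record that this decomposition is ambiguous only up to unit rescalings of $\omega$ and $f$ and up to replacements $(h,\eta)\mapsto(h+fk,\,\eta-k\,df)$, and that none of these alter $(h/g)|_C$ or the residue of $(-\eta/h)|_C$, because $f|_C=0$ and $df|_C=0$. This makes the local numbers ${\rm Z}(\cF,C,p)$ and ${\rm CS}(\cF,C,p)$ well defined. I would then cover $C$ by charts $U_i$ on which $\cF$ is given by $\omega_i$ and $C$ by $f_i=0$, and write the transition data $\omega_i=\phi_{ij}\omega_j$, $f_i=\psi_{ij}f_j$, where $\{\phi_{ij}\}$ and $\{\psi_{ij}\}$ are the transition cocycles of $N_{\cF}$ and of $\cO_X(C)$.

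The core computation is to substitute $\omega_i=\phi_{ij}\omega_j$ and $f_i=\psi_{ij}f_j$ into the relation over $U_i$, eliminate $\omega_j$ using the relation over $U_j$, and compare the coefficients of $df_j$ and of $f_j$. After restriction to $C$ (which kills the residual ambiguity noted above) this yields the two cocycle identities
\[
\left.\frac{h_i}{g_i}\right|_C=\left.\frac{\phi_{ij}}{\psi_{ij}}\frac{h_j}{g_j}\right|_C,
\qquad
\left.\left(-\frac{\eta_i}{h_i}\right)\right|_C=\left.\left(-\frac{\eta_j}{h_j}\right)\right|_C+d\log(\psi_{ij}|_C).
\]
The first identity exhibits $\{(h/g)|_C\}$ as a meromorphic section of the line bundle $(N_{\cF}\otimes\cO_X(-C))|_C$, whose order of vanishing at $p$ is by definition ${\rm Z}(\cF,C,p)$. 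The second exhibits $\{(-\eta/h)|_C\}$ as the local family of connection $1$-forms of a meromorphic connection on $N_C=\cO_C(C)$, whose residue at $p$ is by definition ${\rm CS}(\cF,C,p)$.

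Both conclusions are then read off on the compact curve $C$. The sum of the orders of a meromorphic section equals the degree of its line bundle, so ${\rm Z}(\cF,C)=\deg\!\big((N_{\cF}\otimes\cO_X(-C))|_C\big)=(K_{\cF}-K_X)\cdot C-C^2$; substituting the adjunction relation $2p_a(C)-2=(K_X+C)\cdot C$ turns this into $K_{\cF}\cdot C+2-2p_a(C)$. For the connection, I would compare $\{(-\eta/h)|_C\}$ with $d\log$ of an auxiliary global meromorphic section of $\cO_C(C)$; their difference is a global meromorphic $1$-form, whose residues sum to zero by the ordinary residue theorem, so $\sum_p{\rm CS}(\cF,C,p)$ equals the sum of the orders of that section, namely $\deg\cO_C(C)=C^2$. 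This is the Camacho--Sad formula.

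The main point requiring care is that $C$ need not be smooth --- nodal rational curves and cycles of rational curves occur later as elliptic Gorenstein leaves. I would handle this by pulling the line bundles, their cocycles, and the connection back to the normalization $n\colon\widetilde C\to C$, where the degree count and the residue theorem apply unchanged and degrees are preserved; the arithmetic genus $p_a(C)$ enters only through adjunction on the smooth surface $X$, which remains valid for singular $C$. The delicate bookkeeping is the treatment of the local indices at points that are simultaneously singular for $C$ and for $\cF$; away from these the argument is the verbatim classical one, and for these one may alternatively appeal to Brunella's account in~\cite{Bru15}.
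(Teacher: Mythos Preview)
Your argument is the standard cocycle proof of these two index formulas and is correct; this is precisely the approach in Brunella's book. The paper does not give its own proof but simply refers to \cite[Proposition~2.3 and Theorem~3.2]{Bru15}, so you have in effect reproduced (in outline) the cited source rather than diverged from it.
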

\begin{proof}
See \cite[Proposition 2.3 and Theorem 3.2]{Bru15}.
\end{proof}

\begin{remark}\label{rem:local-Z-values}
We shall use the standard local facts that, at a reduced non-degenerate
singularity, each invariant separatrix has
\({\rm Z}(\mathcal F,C,p)=1\), while at a saddle-node the strong separatrix has
\({\rm Z}(\mathcal F,C,p)=1\) and a weak separatrix, when it exists, has
\({\rm Z}(\mathcal F,C,p)\ge2\).  See \cite[pp.~30--31]{Bru15}.
\end{remark}

Next, we recall the separatrix theorem.
\begin{theorem}[Separatrix theorem]\label{thm:separatrix}
  Let $\cF$ be a foliation on a smooth projective surface $X$ and let $C\subset X$ be a connected compact $\cF$-invariant curve such that:
  \begin{itemize}
  \item[\rm(i)] All the singularities of $\cF$ on $C$ are reduced (in particular, $C$ has only normal crossing singularities);
  \item[\rm(ii)] If $C_1,\dots,C_n$ are the irreducible components of $C$, then the intersection matrix $(C_iC_j)_{1\leq i,j\leq n}$ is negative definite and the dual graph $\Gamma$ is a tree.
  \end{itemize}
  Then there exists at least one point $p\in C\cap{\rm Sing}(\cF)$ and a separatrix through $p$ not contained in $C$.  
  \end{theorem}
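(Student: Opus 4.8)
The plan is to argue by contradiction: suppose that \emph{every} separatrix of $\cF$ through a point of $C\cap\mathrm{Sing}(\cF)$ is contained in $C$, and derive a contradiction with the negative-definiteness of the intersection matrix. The only inputs needed are the Camacho--Sad formula $\mathrm{CS}(\cF,C_i)=C_i^2$ (Proposition~\ref{prop:Z-index-and-CS-index}), the local index computations for reduced singularities (Lemma~\ref{lem:Z-CS-index-for-reduced}), and a comparison with Gaussian elimination on the negative-definite tree. Note first that, since the dual graph is a tree with normal crossings, every $C_i$ is smooth and any two components meet in at most one point, with $C_i\cdot C_j\in\{0,1\}$.

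First I would record the local contribution of each $p\in C_i\cap\mathrm{Sing}(\cF)$ under the contradiction hypothesis. If $p$ is a smooth point of $C$ lying on a single component $C_i$, then $C_i$ is the only branch through $p$; a non-degenerate reduced singularity would have a second separatrix transverse to $C_i$ and hence leaving $C$, which is excluded, so $p$ must be a saddle-node with strong separatrix $C_i$ and no convergent weak separatrix, giving $\mathrm{CS}(\cF,C_i,p)=0$ by Lemma~\ref{lem:Z-CS-index-for-reduced}(2). If instead $p$ is a node of $C$ where $C_i,C_j$ cross (a corner), both branches are invariant and Lemma~\ref{lem:Z-CS-index-for-reduced} gives either the reciprocity $\mathrm{CS}(\cF,C_i,p)\cdot\mathrm{CS}(\cF,C_j,p)=1$ (non-degenerate case) or that one of the two indices vanishes (saddle-node case). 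Summing Camacho--Sad over $C_i$ and discarding the vanishing smooth-point terms yields, for each $i$,
\[
C_i^2=\sum_{j:\,C_j\cap C_i\neq\emptyset}\mathrm{CS}(\cF,C_i,p_{ij}).
\]

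The heart of the argument is to match this index data with the symmetric (Schur-complement) elimination of the intersection matrix. I would root the tree at a vertex $r$ and, processing from the leaves upward, set $d_v:=C_v^2-\sum_{w}1/d_w$, the sum over the children $w$ of $v$; these $d_v$ are exactly the pivots produced by eliminating each subtree, so negative-definiteness is equivalent to $d_v<0$ for all $v$. On the foliation side, for a non-root vertex $v$ with parent $u$ put $\gamma_v:=\mathrm{CS}(\cF,C_v,p_{vu})$, so that the identity above reads $\gamma_v=C_v^2-\sum_{w}\mathrm{CS}(\cF,C_v,p_{vw})$, degenerating at the root to $C_r^2-\sum_w\mathrm{CS}(\cF,C_r,p_{rw})=0$. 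I would then prove by induction from the leaves that each $\gamma_v$ is a real number with $\gamma_v\le d_v<0$: at a leaf this is the equality $\gamma_v=C_v^2=d_v$, and the inductive step reduces to the termwise inequality $\mathrm{CS}(\cF,C_v,p_{vw})\ge 1/d_w$ at every child $w$. At a non-degenerate corner this follows from $\mathrm{CS}(\cF,C_v,p_{vw})=1/\gamma_w$ together with $\gamma_w\le d_w<0$ (reciprocals reverse order on the negative axis); at a saddle-node corner the degenerate configuration forcing $\gamma_w=0$ is ruled out by $d_w<0$, leaving only $\mathrm{CS}(\cF,C_v,p_{vw})=0\ge 1/d_w$. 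Applying the same estimate at the root gives $0=C_r^2-\sum_w\mathrm{CS}(\cF,C_r,p_{rw})\le d_r<0$, a contradiction.

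I expect the main obstacle to be exactly this bridge between the local Camacho--Sad indices and the global negative-definiteness: recognizing that the index recursion coming from Proposition~\ref{prop:Z-index-and-CS-index} mirrors the Schur-complement recursion of the intersection matrix, and keeping the bookkeeping clean enough that the a priori complex corner indices are forced to be real and are controlled by monotone inequalities. The saddle-node corners are the delicate point, since there the clean reciprocity $\mathrm{CS}\cdot\mathrm{CS}=1$ fails; the remedy is to work with the inequality $\gamma_v\le d_v$ rather than an exact recursion and to verify that negative-definiteness itself excludes the degenerate branch. Once this is in place the contradiction at the root is immediate, and the theorem follows.
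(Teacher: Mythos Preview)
The paper does not give its own proof of this theorem; it simply cites \cite[Theorem~3.4]{Bru15}. Your proposal is correct and is in fact the argument that appears in Brunella's book: assume for contradiction that all separatrices through $C\cap\mathrm{Sing}(\cF)$ lie in $C$, use the Camacho--Sad formula together with the local index rules at reduced singularities to set up a recursion along the tree, and compare it to the Schur-complement elimination of the intersection matrix to contradict negative-definiteness at the root. The handling of the two delicate points---forcing reality of the a priori complex indices via the induction, and excluding the ``wrong'' branch at a saddle-node corner because $\gamma_w\le d_w<0$ forbids $\gamma_w=0$---is exactly as in the cited reference, so there is nothing to add.
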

  \begin{proof}
  See \cite[Theorem 3.4]{Bru15}.
  \end{proof}

\subsection{Foliated and adjoint singularities}

\subsubsection{Foliated pairs and triples}

Let \(X\) be a normal surface and let \(\mathcal F\) be a foliation on \(X\).
For an \(\mathbb R\)-divisor \(D\) on \(X\), write
\[
D=D^{\rm inv}_{\mathcal F}+D^{\rm n\!-\!inv}_{\mathcal F}
\]
for its decomposition into the \(\mathcal F\)-invariant and non-\(\mathcal F\)-invariant
parts.  When there is no risk of confusion, we omit the subscript
\(\mathcal F\).

We use the standard terminology of pairs and foliated pairs; in particular, a
foliated pair \((\mathcal F,\Delta)\) means that \(\Delta\ge0\) and
\(K_{\mathcal F}+\Delta\) is \(\mathbb R\)-Cartier.  For the adjoint divisors
considered below, we shall use the following variant.

\begin{definition}
A \emph{foliated triple} $(X, \mathcal{F}, \Delta)$ consists of a foliation $\mathcal{F}$ on a normal surface $X$ and an effective $\mathbb{R}$-divisor $\Delta \ge 0$ such that both
$K_{\mathcal{F}} + \Delta^{\mathrm{n\!-\!inv}}$ and $K_X + \Delta$ are $\mathbb{R}$-Cartier.
\end{definition}

\subsubsection{Foliated log canonical singularities}
For a birational morphism \(\pi:X'\to X\), write
\[
K_{\mathcal F'}=\pi^*(K_{\mathcal F}+\Delta)+
\sum_E a(E,\mathcal F,\Delta)E.
\]
The numbers \(a(E,\mathcal F,\Delta)\) are the \emph{discrepancies} of
\((\mathcal F,\Delta)\).

\begin{definition}\label{def:lc-foliation}
Let $X$ be a normal surface and let $(\cF,\Delta)$ be a foliated pair on $X$.
We say that $(\cF,\Delta)$ is
\emph{terminal} (resp.\ \emph{canonical}, \emph{klt}, \emph{log canonical}) if
\[
a(E,\cF,\Delta) > 0
\quad (\text{resp.\ } \ge 0,\ > -\iota(E)\ \text{and } \lfloor \Delta \rfloor = 0,\ \ge -\iota(E))
\]
for any birational morphism $\pi \colon X' \to X$ and for any prime divisor $E$
on $X'$, where
\begin{equation}\label{equ:def-iota(E)}
\iota(E) :=
\begin{cases}
1, & \text{if $E$ is not $\cF'$-invariant},\\
0, & \text{if $E$ is $\cF'$-invariant}.
\end{cases}
\end{equation}
\end{definition}

\subsubsection{\((\epsilon,\delta)\)-adjoint singularities}
\begin{definition}\label{def:K-(X,F,D)-e}
Let $(X, \cF, \Delta)$ be a foliated triple on a normal surface $X$, and fix
$\epsilon \ge 0$.
The \emph{$\epsilon$-adjoint divisor} of $(X, \cF, \Delta)$ is defined by
\[
K_{(X, \cF, \Delta), \epsilon}
:= (K_{\cF} + \Delta^{\mathrm{n\!-\!inv}})
+ \epsilon (K_X + \Delta).
\]
\end{definition}

\begin{definition}\label{def:adj-lc-sing}
Let $(X,\cF,\Delta)$ be a foliated triple.
Fix $\epsilon > 0$ and $\delta \ge 0$.
We say that $(X,\cF,\Delta)$ is
\emph{$(\epsilon,\delta)$-adjoint log canonical}
(resp.\ \emph{$(\epsilon,\delta)$-adjoint klt})
if for any birational morphism $\pi \colon X' \to X$,
writing
\[
K_{(X',\cF',\Delta'),\epsilon}
= \pi^* K_{(X,\cF,\Delta),\epsilon}
+ \sum_i a_i E_i,
\]
where the $E_i$ are $\pi$-exceptional divisors and
$\Delta' := \pi_*^{-1}\Delta$ is the strict transform of $\Delta$,
we have, for all $i$,
\[
a_i \ge (\iota(E_i)+\epsilon)(-1+\delta)
\]
\[(\text{resp.}\quad
a_i > (\iota(E_i)+\epsilon)(-1+\delta)
\quad \text{and} \quad \lfloor \Delta \rfloor = 0).
\]

When $\delta = 1$, we call $(\epsilon,\delta)$-adjoint log canonical
\emph{$\epsilon$-adjoint canonical}.
When $\delta = 0$, we call $(\epsilon,\delta)$-adjoint log canonical simply
\emph{$\epsilon$-adjoint log canonical}.

Moreover, we say that $(\cF,\Delta)$ is \emph{$\epsilon$-adjoint terminal} if $a_i>0$ for all $i$.
\end{definition}

\begin{definition}\label{def:relatively-minimal}
Let \((X,\mathcal F)\) be a smooth projective foliated surface such that \(\mathcal F\) has
at most canonical singularities.  We say that \((X,\mathcal F)\) is
\emph{relatively minimal} if \(X\) contains no \(\mathcal F\)-invariant
\((-1)\)-curve \(E\) satisfying
\(K_{\mathcal F}\cdot E\le0\).
\end{definition}

\subsection{Basic exceptional configurations}
In this subsection, let \((X,\cF)\) be a foliated surface with \(X\) smooth.
The following configurations form the reference list for the classification
statements below.  They are the usual exceptional configurations appearing in
foliated log canonical surface singularities, together with the terminology needed
to describe the adjoint lists.

\begin{definition}[$\cF$-chain]\label{def:F-chain}
A divisor $\Theta=\Gamma_1+\cdots+\Gamma_r$ is called an \emph{$\cF$-chain} if
\begin{enumerate}
  \item $\Theta$ is a Hirzebruch--Jung string (cf.~\cite[Ch.III; Sec.5]{BPV04});
  \item each irreducible component $\Gamma_j$ is $\cF$-invariant;
  \item ${\rm Sing}(\cF)\cap \Theta$ consists of reduced, non-degenerate singularities;
  \item ${\rm Z}(\cF,\Gamma_1)=1$ and ${\rm Z}(\cF,\Gamma_i)=2$ for all $i\ge 2$.
\end{enumerate}
The curve $\Gamma_1$ is called the \emph{first curve} of the $\cF$-chain
(see Figure~\ref{fig:Fchain}).
\end{definition}

\begin{figure}[htbp]
  \centering
  \def\svgwidth{\columnwidth}
  \scalebox{0.5}{
\begingroup%
  \makeatletter%
  \providecommand\color[2][]{%
    \errmessage{(Inkscape) Color is used for the text in Inkscape, but the package 'color.sty' is not loaded}%
    \renewcommand\color[2][]{}%
  }%
  \providecommand\transparent[1]{%
    \errmessage{(Inkscape) Transparency is used (non-zero) for the text in Inkscape, but the package 'transparent.sty' is not loaded}%
    \renewcommand\transparent[1]{}%
  }%
  \providecommand\rotatebox[2]{#2}%
  \newcommand*\fsize{\dimexpr\f@size pt\relax}%
  \newcommand*\lineheight[1]{\fontsize{\fsize}{#1\fsize}\selectfont}%
  \ifx\svgwidth\undefined%
    \setlength{\unitlength}{126.14618764bp}%
    \ifx\svgscale\undefined%
      \relax%
    \else%
      \setlength{\unitlength}{\unitlength * \real{\svgscale}}%
    \fi%
  \else%
    \setlength{\unitlength}{\svgwidth}%
  \fi%
  \global\let\svgwidth\undefined%
  \global\let\svgscale\undefined%
  \makeatother%
  \begin{picture}(1,0.12199074)%
    \lineheight{1}%
    \setlength\tabcolsep{0pt}%
    \put(0.56121918,0.0263005){\color[rgb]{0,0,0}\makebox(0,0)[lt]{\lineheight{1.25}\smash{\begin{tabular}[t]{l}$\cdots\cdots$\end{tabular}}}}%
    \put(0,0){\includegraphics[width=\unitlength,page=1]{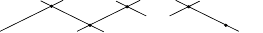}}%
    \put(0.25219959,0.07537052){\color[rgb]{0,0,0}\makebox(0,0)[lt]{\lineheight{1.25}\smash{\begin{tabular}[t]{l}$\Gamma_2$\end{tabular}}}}%
    \put(0.06765842,0.01258831){\color[rgb]{0,0,0}\makebox(0,0)[lt]{\lineheight{1.25}\smash{\begin{tabular}[t]{l}$\Gamma_1$\end{tabular}}}}%
    \put(0.78870066,0.0677606){\color[rgb]{0,0,0}\makebox(0,0)[lt]{\lineheight{1.25}\smash{\begin{tabular}[t]{l}$\Gamma_r$\end{tabular}}}}%
  \end{picture}%
\endgroup%
}
  \caption{$\cF$-chain with the first curve $\Gamma_1$}
  \label{fig:Fchain}
\end{figure}

\begin{definition}\label{def:-2Fcurve}
An irreducible curve $C$ is called a \emph{$(-1)$-$\cF$-curve}
(resp.\ a \emph{$(-2)$-$\cF$-curve}) if
\begin{enumerate}
  \item $C$ is a smooth rational $\cF$-invariant curve, and
  \item ${\rm Z}(\cF,C)=1$ (resp.\ ${\rm Z}(\cF,C)=2$).
\end{enumerate}
The terminology refers to the value of \({\rm Z}(\cF,C)\), not to the
self-intersection of \(C\).
\end{definition}

\begin{remark}\label{rem:minus-F-curves}
We shall use two standard facts.  First, a \((-1)\)-\(\mathcal F\)-curve with
self-intersection \(-1\) is \(\mathcal F\)-exceptional, and hence cannot occur on
a minimal resolution.  Secondly, a \((-2)\)-\(\mathcal F\)-curve with
self-intersection \(-1\) has one non-exceptional case: after contraction the
induced foliation is locally defined by
\[
\omega=(x+y)\,dx-x\,dy,
\]
equivalently the curve contains a unique saddle-node and is its weak separatrix.
See \cite[pp.~7--8]{Bru15}.
\end{remark}

\begin{definition}\label{def:bad-tail}
A curve $C$ is called a \emph{bad tail} if
\begin{enumerate}
  \item $C$ is a smooth rational irreducible $\cF$-invariant curve with
  ${\rm Z}(\cF,C)=3$ and $C^2\le -2$;
  \item $C$ intersects two $(-1)$-$\cF$-curves whose self-intersections are both $-2$.
\end{enumerate}
(See Figure~\ref{fig:badtail}.)
\end{definition}

\begin{figure}[htbp]
  \centering
  \def\svgwidth{\columnwidth}
  \scalebox{0.3}{
\begingroup%
  \makeatletter%
  \providecommand\color[2][]{%
    \errmessage{(Inkscape) Color is used for the text in Inkscape, but the package 'color.sty' is not loaded}%
    \renewcommand\color[2][]{}%
  }%
  \providecommand\transparent[1]{%
    \errmessage{(Inkscape) Transparency is used (non-zero) for the text in Inkscape, but the package 'transparent.sty' is not loaded}%
    \renewcommand\transparent[1]{}%
  }%
  \providecommand\rotatebox[2]{#2}%
  \newcommand*\fsize{\dimexpr\f@size pt\relax}%
  \newcommand*\lineheight[1]{\fontsize{\fsize}{#1\fsize}\selectfont}%
  \ifx\svgwidth\undefined%
    \setlength{\unitlength}{62.69560831bp}%
    \ifx\svgscale\undefined%
      \relax%
    \else%
      \setlength{\unitlength}{\unitlength * \real{\svgscale}}%
    \fi%
  \else%
    \setlength{\unitlength}{\svgwidth}%
  \fi%
  \global\let\svgwidth\undefined%
  \global\let\svgscale\undefined%
  \makeatother%
  \begin{picture}(1,0.17782633)%
    \lineheight{1}%
    \setlength\tabcolsep{0pt}%
    \put(0,0){\includegraphics[width=\unitlength,page=1]{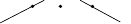}}%
    \put(0.41975037,0.0565801){\color[rgb]{0,0,0}\makebox(0,0)[lt]{\lineheight{1.25}\smash{\begin{tabular}[t]{l}$C$\end{tabular}}}}%
    \put(0.00633916,0.04892436){\color[rgb]{0,0,0}\makebox(0,0)[lt]{\lineheight{1.25}\smash{\begin{tabular}[t]{l}$-2$\end{tabular}}}}%
    \put(0.85230086,0.06040762){\color[rgb]{0,0,0}\makebox(0,0)[lt]{\lineheight{1.25}\smash{\begin{tabular}[t]{l}$-2$\end{tabular}}}}%
    \put(0,0){\includegraphics[width=\unitlength,page=2]{badtail.pdf}}%
  \end{picture}%
\endgroup%
}
  \caption{$C$ is a bad tail}
  \label{fig:badtail}
\end{figure}

\begin{definition}\label{def:F-dihedral-fork}
An \emph{$\cF$-dihedral fork} is a divisor
\[
\cD=\Gamma_1+\Gamma_2+\cdots+\Gamma_r,
\]
where
\begin{enumerate}
  \item $\Gamma_1$ and $\Gamma_2$ are $(-1)$-$\cF$-curves with self-intersection $-2$;
  \item $\Gamma_3$ is a bad tail, which is attached to a chain of $(-2)$-$\cF$-curves
  $\Gamma_4+\cdots+\Gamma_r$.
\end{enumerate}
(See Figure~\ref{fig:Dihedral}.)
\end{definition}

\begin{figure}[htbp]
  \centering
  \def\svgwidth{\columnwidth}
  \scalebox{0.4}{
\begingroup%
  \makeatletter%
  \providecommand\color[2][]{%
    \errmessage{(Inkscape) Color is used for the text in Inkscape, but the package 'color.sty' is not loaded}%
    \renewcommand\color[2][]{}%
  }%
  \providecommand\transparent[1]{%
    \errmessage{(Inkscape) Transparency is used (non-zero) for the text in Inkscape, but the package 'transparent.sty' is not loaded}%
    \renewcommand\transparent[1]{}%
  }%
  \providecommand\rotatebox[2]{#2}%
  \newcommand*\fsize{\dimexpr\f@size pt\relax}%
  \newcommand*\lineheight[1]{\fontsize{\fsize}{#1\fsize}\selectfont}%
  \ifx\svgwidth\undefined%
    \setlength{\unitlength}{145.34549677bp}%
    \ifx\svgscale\undefined%
      \relax%
    \else%
      \setlength{\unitlength}{\unitlength * \real{\svgscale}}%
    \fi%
  \else%
    \setlength{\unitlength}{\svgwidth}%
  \fi%
  \global\let\svgwidth\undefined%
  \global\let\svgscale\undefined%
  \makeatother%
  \begin{picture}(1,0.1190861)%
    \lineheight{1}%
    \setlength\tabcolsep{0pt}%
    \put(0.10743431,0.09554869){\color[rgb]{0,0,0}\makebox(0,0)[lt]{\lineheight{1.25}\smash{\begin{tabular}[t]{l}$\Gamma_1(-2)$\end{tabular}}}}%
    \put(0.61917962,0.02282636){\color[rgb]{0,0,0}\makebox(0,0)[lt]{\lineheight{1.25}\smash{\begin{tabular}[t]{l}$\cdots\cdots$\end{tabular}}}}%
    \put(0,0){\includegraphics[width=\unitlength,page=1]{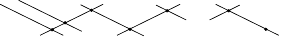}}%
    \put(-0.00072131,0.04560056){\color[rgb]{0,0,0}\makebox(0,0)[lt]{\lineheight{1.25}\smash{\begin{tabular}[t]{l}$\Gamma_2(-2)$\end{tabular}}}}%
    \put(0.35097975,0.0654145){\color[rgb]{0,0,0}\makebox(0,0)[lt]{\lineheight{1.25}\smash{\begin{tabular}[t]{l}$\Gamma_4$\end{tabular}}}}%
    \put(0.25521135,0.04229807){\color[rgb]{0,0,0}\makebox(0,0)[lt]{\lineheight{1.25}\smash{\begin{tabular}[t]{l}$\Gamma_3$\end{tabular}}}}%
    \put(0.8166121,0.05880981){\color[rgb]{0,0,0}\makebox(0,0)[lt]{\lineheight{1.25}\smash{\begin{tabular}[t]{l}$\Gamma_r$\end{tabular}}}}%
  \end{picture}%
\endgroup%
}
  \caption{$\cF$-dihedral fork}
  \label{fig:Dihedral}
\end{figure}

\begin{definition}[Elliptic Gorenstein leaf]\label{def:egl}
An \emph{elliptic Gorenstein leaf} is either a rational $\cF$-invariant curve
with a single node, or a cycle of $(-2)$-$\cF$-curves.
\end{definition}

\begin{definition}[$\cF$-star graph]\label{def:F-star-graph}
We say that
\[
\cT= C + \sum_{i=1}^s \Theta_i \qquad (s\geq0)
\]
is an \emph{$\cF$-star graph} (\emph{centered at $C$}) if
\begin{enumerate}
  \item $C$ is a smooth irreducible non-$\cF$-invariant curve with
  ${\rm tang}(\cF,C)=0$;
  \item for each $i=1,\dots,s$, $\Theta_i=\sum_{j=1}^{r(i)}\Gamma_{ij}$ is an $\cF$-chain
  whose first curve $\Gamma_{i1}$ satisfies
  \(C\cdot \Theta_i = C\cdot \Gamma_{i1} = 1\).
\end{enumerate}
We say that $\cT$ is of type $(s; m_1,\dots,m_s)$, where
\[
m_i = \det\!\bigl(-\Gamma_{ij}\cdot\Gamma_{ik}\bigr)_{1\le j,k\le r(i)}.
\]
The curve $C$ is called the \emph{center} of $\cT$
(see Figure~\ref{fig:F-star-graph}).

Moreover, if $s\leq2$, we call $\cT$ an \emph{$\cF$-star chain}; if $s\geq3$, we call $\cT$ an \emph{$\cF$-star fork}.
We say that an \(\cF\)-star chain is \emph{non-trivial} if it has at least two
irreducible components, equivalently if at least one \(\cF\)-chain is attached to
its center.
\end{definition}

\begin{figure}[htbp]
  \centering
  \def\svgwidth{\columnwidth}
  \scalebox{0.4}{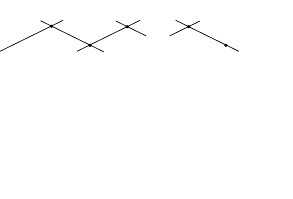}
  \caption{$\cF$-star graph centered at $C$}
  \label{fig:F-star-graph}
\end{figure}

\begin{figure}[htbp]
  \centering
  \def\svgwidth{\columnwidth}
  \scalebox{0.5}{
\begingroup%
  \makeatletter%
  \providecommand\color[2][]{%
    \errmessage{(Inkscape) Color is used for the text in Inkscape, but the package 'color.sty' is not loaded}%
    \renewcommand\color[2][]{}%
  }%
  \providecommand\transparent[1]{%
    \errmessage{(Inkscape) Transparency is used (non-zero) for the text in Inkscape, but the package 'transparent.sty' is not loaded}%
    \renewcommand\transparent[1]{}%
  }%
  \providecommand\rotatebox[2]{#2}%
  \newcommand*\fsize{\dimexpr\f@size pt\relax}%
  \newcommand*\lineheight[1]{\fontsize{\fsize}{#1\fsize}\selectfont}%
  \ifx\svgwidth\undefined%
    \setlength{\unitlength}{179.5825967bp}%
    \ifx\svgscale\undefined%
      \relax%
    \else%
      \setlength{\unitlength}{\unitlength * \real{\svgscale}}%
    \fi%
  \else%
    \setlength{\unitlength}{\svgwidth}%
  \fi%
  \global\let\svgwidth\undefined%
  \global\let\svgscale\undefined%
  \makeatother%
  \begin{picture}(1,0.08654667)%
    \lineheight{1}%
    \setlength\tabcolsep{0pt}%
    \put(0.47494003,0.02339617){\color[rgb]{0,0,0}\makebox(0,0)[lt]{\lineheight{1.25}\smash{\begin{tabular}[t]{l}$C$\end{tabular}}}}%
    \put(-0.00117276,0.0514598){\color[rgb]{0,0,0}\makebox(0,0)[lt]{\lineheight{1.25}\smash{\begin{tabular}[t]{l}$\Theta_1$\end{tabular}}}}%
    \put(0.87103167,0.02807308){\color[rgb]{0,0,0}\makebox(0,0)[lt]{\lineheight{1.25}\smash{\begin{tabular}[t]{l}$\Theta_2$\end{tabular}}}}%
    \put(0.66415483,0.033362){\color[rgb]{0,0,0}\makebox(0,0)[lt]{\lineheight{1.25}\smash{\begin{tabular}[t]{l}$\cdots\cdots$\end{tabular}}}}%
    \put(0,0){\includegraphics[width=\unitlength,page=1]{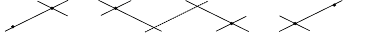}}%
    \put(0.1830175,0.01771634){\color[rgb]{0,0,0}\makebox(0,0)[lt]{\lineheight{1.25}\smash{\begin{tabular}[t]{l}$\cdots\cdots$\end{tabular}}}}%
  \end{picture}%
\endgroup%
}
  \caption{$\cF$-star chain centered at $C$, where $\Theta_1$ and $\Theta_2$ may be empty.}
  \label{fig:F-star-chain}
\end{figure}

\begin{remark}
The configurations introduced above arise naturally in the classification of
foliated surface singularities.  The terms \(\mathcal F\)-dihedral fork and
\(\mathcal F\)-star graph are used here as convenient names for the corresponding
configurations appearing below.
\end{remark}

\subsection{Classification of surface and foliated surface singularities}
We recall the two classical classification results used as reference lists for the
\(\epsilon\)-adjoint classification below.

\begin{theorem}\label{thm:lc-surface}
Let \(p\) be a log canonical singularity of a normal surface \(Y\). Let
\(\pi:X\to Y\) be the minimal resolution of \((Y,p)\), with exceptional divisor
\(E=\bigcup_i E_i\). Then \(E\) is one of the following:
\begin{enumerate}
\item a Hirzebruch--Jung chain;
\item an irreducible curve \(C\), where \(C\) is either a smooth elliptic curve or
a rational curve with one node;
\item a cycle of smooth rational curves;
\item a double-fork configuration of smooth rational curves, whose dual graph
consists of a chain joining two trivalent vertices, each carrying one additional
\((-2)\)-end component, and whose components all have self-intersection at most
\(-2\);
\item a single fork centered at a smooth rational curve \(C\), such that
\(E\setminus C\) consists of pairwise disjoint Hirzebruch--Jung chains
\(\Theta_1,\ldots,\Theta_l\) of determinant type \((m_1,\ldots,m_l)\), where
one of the following holds:
\begin{enumerate}
\item \(l=3\) and
$(m_1,m_2,m_3)=(2,2,n)$,\ $n\ge2$, $(2,3,3)$, $(2,3,4)$, or $(2,3,5)$;
\item \(l=3\) and
$(m_1,m_2,m_3)=(2,3,6)$, $(2,4,4)$, or $(3,3,3)$;

\item \(l=4\) and $(m_1,m_2,m_3,m_4)=(2,2,2,2)$.
\end{enumerate}
\end{enumerate}
The non-klt cases are precisely types {\rm (2)}, {\rm (3)}, {\rm (4)}, and
the second and third subcases in {\rm (5)}.
\end{theorem}
\begin{proof}
See \cite{Alex92} or \cite[Sec.~4.1]{KM98}.
\end{proof}

\begin{theorem}\label{thm:foliated-lc-classification}
Let \((Y,\cG,p)\) be a germ of a foliated surface with log canonical
foliated singularity at \(p\), and let
\(\pi:(X,\cF)\to(Y,\cG)\)
be the minimal resolution with exceptional divisor \(E=\cup_i E_i\).
Then \(E\) is one of the following:
\begin{enumerate}
\item an \(\cF\)-chain; see Definition~\ref{def:F-chain};

\item a chain of \((-2)\)-\(\cF\)-curves; see
Definition~\ref{def:-2Fcurve};

\item a chain of three \(\cF\)-invariant curves
\(E_1+E_2+E_3\),
where \(E_1,E_3\) are \((-1)\)-\(\cF\)-curves and \(E_2\) is a bad tail;
see Definition~\ref{def:bad-tail};

\item an \(\cF\)-dihedral fork; see Definition~\ref{def:F-dihedral-fork};

\item an elliptic Gorenstein leaf; see Definition~\ref{def:egl};

\item an \(\cF\)-star graph centered at a smooth non-\(\cF\)-invariant curve
\(C\) with
\(\operatorname{tang}(\cF,C)=0\); 
see Definition~\ref{def:F-star-graph}.
\end{enumerate}
Here, type {\rm(1)} is terminal and types {\rm(1)}--{\rm(5)} are canonical
as foliated singularities.
\end{theorem}
\begin{proof}
See \cite[Theorem~1.1]{YAChen23}. 
We note that the cases numbered {\rm(6)} and {\rm(7)} in
\cite[Theorem~1.1]{YAChen23} are combined here into the single case {\rm(6)}.
\end{proof}

\subsection{A negativity criterion on surfaces}

\begin{lemma}\label{lem:neg-def}
Let $D=\sum_{i=1}^n a_i C_i$ be a $\bQ$-divisor such that the intersection matrix
\(
(C_i \cdot C_j)_{1\le i,j\le n}
\)
is negative definite.
\begin{enumerate}
\item
If $D\cdot C_i \le 0$ for all $i=1,\dots,n$, then $D \ge 0$.
\item
If $E$ is an effective $\bQ$-divisor and $(E-D)\cdot C_j \le 0$ for all $j=1,\dots,n$,
then $E-D \ge 0$.
\end{enumerate}
\end{lemma}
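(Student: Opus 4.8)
The plan is to prove both statements by the classical \emph{negativity} argument: decompose the divisor in question into its positive and negative parts, and show that the negative part must vanish by pairing it against the divisor and invoking negative definiteness. Throughout I use the standing geometric fact that the $C_i$ are distinct prime divisors on a smooth surface, so that $C_i\cdot C_j\ge 0$ whenever $i\ne j$; equivalently, any two effective $\mathbb{Q}$-divisors that are supported on the $C_i$ and share no common component meet non-negatively.

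For (1), first I would write $D=P-N$, where $P=\sum_{a_i>0}a_iC_i$ and $N=\sum_{a_i<0}(-a_i)C_i$ are effective with disjoint support, so that the goal becomes $N=0$. On the one hand, since $N$ has non-negative coefficients and $D\cdot C_i\le 0$ for every $i$, we obtain $D\cdot N\le 0$. On the other hand, $D\cdot N=P\cdot N-N^2$, where $P\cdot N\ge 0$ by the disjoint-support remark, while $N^2<0$ whenever $N\ne 0$ by negative definiteness; hence $D\cdot N>0$ if $N\ne 0$. The two inequalities are incompatible unless $N=0$, which yields $D=P\ge 0$.

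For (2), I would set $G:=E-D$ and split $G=G_0+G'$, where $G_0=\sum_i b_iC_i$ collects the components supported on the $C_i$ and $G'$ is the remainder. Since $D$ is supported on the $C_i$ and $E\ge 0$, the divisor $G'$ is a subdivisor of $E$ and hence effective. Writing $G_0=P-N$ with $P,N$ as above, it again suffices to prove $N=0$. The hypothesis $(E-D)\cdot C_i\le 0$ gives $G\cdot N\le 0$, while expanding yields $G\cdot N=P\cdot N-N^2+G'\cdot N$; here $P\cdot N\ge 0$ and $G'\cdot N\ge 0$ (the support of $G'$ is disjoint from that of $N$), and $-N^2>0$ if $N\ne 0$. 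As before this forces $N=0$, so $G=P+G'\ge 0$.

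The computation is entirely formal once the signs of the cross terms are secured, so the only point demanding care — and the step I would treat as the crux — is the non-negativity of the mixed intersection numbers $P\cdot N$ and $G'\cdot N$. This is precisely where the geometric hypothesis enters (distinct irreducible curves on a smooth surface meet non-negatively), and it is also the reason one must track the components of $E$ lying off the $C_i$ separately in~(2). Finally, I would note that part~(1) is the special case $E=0$ of part~(2) applied to $-D$ in place of $D$; indeed, both parts follow at once from the purely linear-algebraic statement that a symmetric negative definite matrix $M=(C_i\cdot C_j)$ with non-negative off-diagonal entries sends any vector $v$ with $Mv\le 0$ to a vector $v\ge 0$, with~(2) reduced to this by absorbing the term $G'\cdot C_j\ge 0$.
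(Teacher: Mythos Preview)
Your argument is correct: the positive/negative decomposition $D=P-N$ (resp.\ $G_0=P-N$) together with negative definiteness and the non-negativity of intersections between effective divisors with no common component is exactly the standard proof of the negativity lemma, and your handling of the extra term $G'$ in part~(2) is the right way to deal with components of $E$ lying outside $\{C_1,\dots,C_n\}$.

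The paper, however, does not argue this directly: it simply cites \cite[Lemma~14.9 and Lemma~14.15]{Luc01}. So your route is genuinely different in that it is self-contained rather than a black-box citation. What you gain is transparency---the reader sees exactly where negative definiteness and the sign of off-diagonal entries enter---and your final remark reducing both parts to a single linear-algebraic statement about negative definite matrices with non-negative off-diagonal entries is a clean unification. What the paper's approach buys is brevity and a pointer to a standard reference where the reader can find the same argument (indeed, B\u{a}descu's Lemmas~14.9 and~14.15 are proved by essentially the decomposition you wrote down).
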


\begin{proof}
Both statements follow from \cite[Lemma~14.9 and Lemma~14.15]{Luc01}.
\end{proof}

\section{Numerical reduction for negative definite configurations}
\label{sec:numerical-reduction}

This section develops a reduction procedure for numerical effectivity problems on negative definite configurations. 
The purpose is to isolate a numerical mechanism for classifying surface configurations satisfying a projection effectivity condition.

\medskip
\paragraph{\bf The numerical effectivity problem.}
Let \(D\) be an \(\mathbb R\)-divisor on a smooth surface \(X\), and let
\[
E=E_1+\cdots+E_n
\]
be a connected negative definite configuration.  We study the condition
\(M(D,E)\ge0\),
or equivalently, the condition that \(E\) is a \(D^{\ge0}\)-graph.  The aim is to
extract structural information on \(E\) from this numerical effectivity
condition.

\medskip

The basic operation is peeling.  The procedure is MMP-like in spirit: it is
driven by negative curves.  However, instead of contracting curves or changing
the surface, we keep the configuration \(E\) fixed and modify the divisor \(D\)
by removing known parts of its numerical projection.  After peeling off a
\(D^{>0}\)-graph \(G\), the residual divisor has zero intersection with every
component of \(G\).  Under the standing effectivity condition, the process stops
precisely when the current divisor has zero intersection with every component of
\(E\).

When \(D\) varies in a family, the same mechanism detects walls: a wall is a
parameter value at which one of the residual intersections vanishes for the first
time.  In the adjoint applications below, the sharp thresholds arise in this way.

The constructions in this section are purely numerical and do not use the foliation. 
They apply to any divisor \(D\) and any negative definite configuration \(E\) on a smooth surface. 
They should therefore be regarded as an intersection-theoretic tool for negative definite surface configurations, 
rather than as a construction specific to adjoint foliated singularities. 
They are closely related to the local Zariski decomposition on a resolution; see \cite[Theorem~A.1]{Sak84}.

Throughout this section, \(E\) is connected and negative definite, and we work under the standing assumption 
\[ 
M(D,E)\ge0. 
\]

\subsection{Numerical projection and peeling}
\label{subsec:numerical-projection-peeling}

Let
\[
G=C_1+\cdots+C_r\subset E
\]
be a subconfiguration.  Since the intersection matrix of \(G\) is negative
definite, for any divisor \(D\) on \(X\) there exists a unique divisor supported
on \(G\),
\[
M(D,G)=\sum_{i=1}^r a_iC_i,
\]
such that
\[
M(D,G)\cdot C_j=D\cdot C_j,\qquad j=1,\ldots,r.
\]

\begin{definition}[Numerical projection]\label{def:numerical-projection}
The divisor \(M(D,G)\) is called the \emph{numerical projection} of \(D\) onto
\(G\).  If all coefficients of \(M(D,G)\) are non-negative, we say that \(G\)
is a \emph{\(D^{\ge0}\)-graph}.  If all coefficients of \(M(D,G)\) are positive,
we say that \(G\) is a \emph{\(D^{>0}\)-graph}.  

If $M(D,G)=0$,
equivalently if \(D\cdot C=0\) for every component \(C\subset G\), we say that
\(G\) is a \emph{\(D^{=0}\)-graph}.
\end{definition}

If \(G=E\), then \(E\) is a \(D^{\ge0}\)-graph precisely when the divisor
supported on \(E\) whose intersection numbers agree with those of \(D\) is
effective.

\begin{definition}[Peeling]
\label{def:peeling}
Let \(G\subset E\) be a \(D^{>0}\)-graph.  We define the \emph{residual divisor}
after peeling off \(G\) by
\[
D^{(G)}:=D-M(D,G).
\]
We say that \(D^{(G)}\) is obtained from \(D\) by \emph{peeling off} \(G\).
\end{definition}

\begin{lemma}[Peeling identity]
\label{lem:peeling-identity}
Let \(G\subset E\) be a \(D^{>0}\)-graph.  Then
\[
D^{(G)}\cdot C=0
\]
for every irreducible component \(C\subset G\).  Moreover, 
for any component
\(C\subset E\), 
\[
D^{(G)}\cdot C=D\cdot C-M(D,G)\cdot C.
\]
\end{lemma}

\begin{proof}
This follows directly from the definition of \(M(D,G)\).
\end{proof}

\begin{lemma}[Peeling decomposition]
\label{lem:peeling-decomposition}
Let \(G\subset E\) be a \(D^{>0}\)-graph.  Then
\[
M(D,E)=M(D,G)+M(D^{(G)},E).
\]
In particular,
\[
M(D,E)>M(D^{(G)},E)\ge0.
\]
\end{lemma}

\begin{proof}
Since the intersection matrix of \(E\) is negative definite, it is
non-degenerate.  Comparing intersections with all components of \(E\) gives the
identity.

For the effectivity, Lemma~\ref{lem:neg-def}(2), applied to \(G\), gives
\[
M(D,G)\le M(D,E),
\]
because \(M(D,E)\geq0\) and for every component \(C\subset G\),
\[
\bigl(M(D,E)-M(D,G)\bigr)\cdot C
=
D^{(G)}\cdot C
=
0.
\]
Hence
\[
M(D^{(G)},E)=M(D,E)-M(D,G)\ge0.
\]
The strict inequality follows from \(M(D,G)>0\).
\end{proof}

\begin{remark}[What peeling does]
Peeling is a reduction of the divisor, not a modification of the configuration.
The configuration \(E\) is kept fixed, while \(D\) is replaced by
\[
D^{(G)}:=D-M(D,G).
\]
By Lemma~\ref{lem:peeling-decomposition}, the part of the numerical projection
supported on \(G\) has been removed, and the same effectivity problem continues
with the residual divisor \(D^{(G)}\) on the fixed configuration \(E\).

Iterating this operation, every peeled component becomes numerically trivial for
the new divisor.  In the applications, each peeled graph is generated by a
component with negative intersection.  Under the standing assumption
\(M(D_j,E)\ge0\), the absence of negative vertices forces \(M(D_j,E)=0\).
Thus the process stops precisely at the \((D_N)^{=0}\)-case.
\end{remark}

\subsection{Boundary contributions and residual clusters}
\label{subsec:boundary-residual}

A peeled subconfiguration remains visible to the unpeeled part through its
boundary contributions.  These boundary contributions determine the residual
intersections on the unpeeled components, and hence determine whether new
negative vertices remain.

Let \(G\subset E\) be a connected subconfiguration.  
When \(G\) is peeled off, the remaining part of \(E\) sees \(G\) only through
the components which meet it.
We define the \emph{boundary} of \(G\) in \(E\) by
\[
\partial_EG
:=
\{\, C\subset E-G \mid C\cdot G>0\,\}.
\]
Thus \(\partial_EG\) is the set of irreducible components of \(E\setminus G\)
which meet \(G\).

We define the \emph{external valency} of \(G\) by
\[
\rho(G):=(E-G)\cdot G=\sum_{C\in\partial_EG} C\cdot G.
\]
The number \(\rho(G)\) counts, with intersection multiplicities, the branches
of \(E\) meeting \(G\) from outside.

We say that \(G\) is \emph{closed} in \(E\) if \(\rho(G)=0\).  Since \(E\) is connected,
a connected subconfiguration \(G\subset E\) is closed if and only if \(G=E\).
For every boundary component \(C\in\partial_EG\), the peeling identity gives
\[
D^{(G)}\cdot C
=
D\cdot C-M(D,G)\cdot C.
\]
Thus \(M(D,G)\cdot C\) is precisely the boundary contribution of the peeled
subconfiguration \(G\) to the residual intersection of \(C\).
\medskip

We also need a \emph{local valency} notation for the unpeeled components.  For an
irreducible component \(C\subset E\), set
\[
d_E(C):=(E-C)\cdot C.
\]
When the ambient configuration is clear, we write simply \(d(C)\).  Thus
\(d(C)\) is the valency of \(C\) in \(E\), counted with intersection
multiplicities.

If \(D=D_0+\lambda E\), then
\[
D\cdot C=(D_0+\lambda C)\cdot C+\lambda d(C).
\]
In particular, by adjunction,
\[
(K_X+E)\cdot C=2p_a(C)-2+d(C).
\]
This elementary formula is the reason why valencies appear in the local
intersection tables below.
\medskip

Finally, suppose that one peels off a possibly disconnected \(D^{>0}\)-graph
\[
G=G_1+\cdots+G_s\subset E,
\]
where \(G_1,\ldots,G_s\) are its connected components.
It is useful to isolate
the part of \(G\) which actually contributes to a given residual component.

Let \(C\subset E\setminus G\) be an irreducible component.  The \emph{residual cluster}
centered at \(C\) is
\[
\mathcal R_C:=C+\sum_{G_i\cdot C>0}G_i.
\]
We set
\[
\rho_C:=\rho(\mathcal R_C).
\]
Thus \(\rho_C\) counts the branches of \(E\) still outside the cluster.  In
particular, \(\rho_C=0\) means that \(\mathcal R_C\) is closed; since \(E\) is
connected, this is equivalent to \(\mathcal R_C=E\).

If
\[
D_1:=D-M(D,G),
\]
then, since the components \(G_i\) are disjoint, we have
\[
M(D,G)=\sum_{i=1}^s M(D,G_i).
\]
Hence for every residual component \(C\subset E\setminus G\),
\[
D_1\cdot C
=
D\cdot C-\sum_{i=1}^s M(D,G_i)\cdot C.
\]
Only those components \(G_i\) contained in \(\mathcal R_C\) contribute to this
sum.  
Hence the residual cluster records exactly the local numerical data which
determine the residual intersection at \(C\).
\medskip

The preceding discussion applies to arbitrary peeled subconfigurations.  In the
applications, however, the subconfigurations to be peeled off are much more
specific: they are chains generated by a single component with negative
\(D\)-intersection. 

\subsection{Special chains and their estimates}
\label{subsec:special-chains}
We now introduce the elementary blocks generated by a single negative vertex.
Let
\[
S=C_1+\cdots+C_r\subset E
\]
be a chain.  We say that \(S\) is a \(D^{>0}\)-chain if it is a
\(D^{>0}\)-graph.

\begin{definition}[Special chain]
\label{def:special-chain}
A \(D^{>0}\)-chain
\(S=C_1+\cdots+C_r\)
is called \emph{special} if
\[
D\cdot C_1<0,\qquad D\cdot C_i\ge0 \quad (i\ge2).
\]
The curve \(C_1\) is called the first curve of \(S\).

A special \(D^{>0}\)-chain is \emph{maximal} if it is not properly contained in
a larger special \(D^{>0}\)-chain with the same first curve.
\end{definition}

\begin{lemma}[Extension criterion]
\label{lem:extension-criterion}
Let
\(
S=C_1+\cdots+C_r
\)
be a \(D^{>0}\)-chain, and let \(C\subset E\setminus S\) be a curve such that
\(S\cdot C=C_r\cdot C=1\).
Then \(S+C\) is a \(D^{>0}\)-chain if and only if
\[
D\cdot C<M(D,S)\cdot C.
\]
\end{lemma}

\begin{proof}
Write
\[
M(D,S)=\sum_{i=1}^r\gamma_iC_i,
\qquad
M(D,S+C)=\sum_{i=1}^{r+1}\gamma_i'C_i,
\]
where \(C_{r+1}:=C\).  Then
\[
\bigl(M(D,S+C)-M(D,S)\bigr)\cdot C_i
=
\begin{cases}
0, & i=1,\ldots,r,\\
D\cdot C-M(D,S)\cdot C, & i=r+1.
\end{cases}
\]
If \(D\cdot C<M(D,S)\cdot C\), then
\(
\bigl(M(D,S+C)-M(D,S)\bigr)\cdot C_i\le0
\)
for all \(i=1,\ldots,r+1\).  By Lemma~\ref{lem:neg-def}, this implies
\(M(D,S+C)>M(D,S)\).
Hence all coefficients of \(M(D,S+C)\) are positive, and \(S+C\) is a
\(D^{>0}\)-chain.

Conversely, if \(D\cdot C\ge M(D,S)\cdot C\), then the coefficient of \(C\) in
\(M(D,S+C)\) is non-positive.  Thus \(S+C\) is not a \(D^{>0}\)-chain.
\end{proof}

\begin{remark}
An analogous statement holds for $D^{\ge0}$-chains with the inequality $D\cdot C\le M(D,S)\cdot C$.
\end{remark}

The extension criterion tells us when a special chain can be prolonged.

To control the boundary contribution produced by a chain
\(S=C_1+\cdots+C_r\), we use the following standard determinant notation:
\[
\begin{cases}
\mu_0=\lambda_{r+1}:=1,\quad \mu_1=\lambda_r:=1,\\
\mu_k:=\det(-C_i\cdot C_j)_{1\le i,j\le k-1},
\quad k=2,\ldots,r+1,\\
\lambda_l:=\det(-C_i\cdot C_j)_{l+1\le i,j\le r},
\quad l=0,\ldots,r-1,\\
n:=\lambda_0=\mu_{r+1}.
\end{cases}
\]

\begin{proposition}[Control along a special chain]
\label{prop:special-chain-estimate}
Let
\(
S=C_1+\cdots+C_r
\)
be a special \(D^{>0}\)-chain.  Write
\[
M(D,S)=\sum_{i=1}^r\gamma_iC_i.
\]
Then:
\begin{enumerate}
\item For \(k=1,\ldots,r\),
\[
0<\gamma_k\le \frac{(-D\cdot C_1)\lambda_k}{n}.
\]

\item Let \(S_k=C_1+\cdots+C_k\).  For \(k\ge2\),
\[
D\cdot C_k<M(D,S_{k-1})\cdot C_k
\le \frac{-D\cdot C_1}{\mu_k}.
\]
\item Each \(S_k\) is a special \(D^{>0}\)-chain, and
\[
0<M(D,S_1)<M(D,S_2)<\cdots<M(D,S_r)=M(D,S).
\]
\end{enumerate}
\end{proposition}

\begin{proof}
Let \(M_1(S)\) be the unique divisor supported on \(S\) such that
\[
M_1(S)\cdot C_1=-1,\qquad M_1(S)\cdot C_i=0\quad (i\ge2).
\]
A direct computation gives
\[
M_1(S)=\sum_{k=1}^r\frac{\lambda_k}{n}C_k.
\]
Since
\[
\bigl((-D\cdot C_1)M_1(S)-M(D,S)\bigr)\cdot C_i\le0
\]
for all \(i\), the negativity criterion gives
\[
M(D,S)\le (-D\cdot C_1)M_1(S).
\]
This proves the coefficient estimate.

The remaining assertions follow by applying the same argument to the initial
subchains \(S_k\), together with the extension criterion.
\end{proof}

\begin{corollary}[Boundary contribution]
\label{cor:boundary-contribution}
With the notation of Proposition~\ref{prop:special-chain-estimate}, let
\(C\subset E\setminus S\) be a curve meeting \(S\) transversely only at \(C_r\).
Then
\[
M(D,S)\cdot C
=
\gamma_r
\le
\frac{-D\cdot C_1}{n}.
\]
If moreover
$D\cdot C_i=0$ for all $i\ge2$,
then equality holds.
\end{corollary}

\begin{remark}
If a curve \(C\subset E\setminus S\) meets \(S\) in several components, then
\[
M(D,S)\cdot C=\sum_{i=1}^r \gamma_i(C_i\cdot C).
\]
In the applications below, the special chains meet the residual component only
at their last component, and the preceding form is sufficient.
\end{remark}

In the adjoint application, the local intersection formulas will first identify
the possible negative vertices.  Each such vertex generates a maximal special
chain.  Proposition~\ref{prop:special-chain-estimate} and
Corollary~\ref{cor:boundary-contribution} control the boundary contributions
of these chains to the residual components.  
This is the numerical input used below to control the residual intersections
after peeling and to locate the adjoint walls.

\section{Adjoint log canonical singularities and the wall \(1/5\)}
\label{sec:adjoint-lc-reduction}

In this section we apply the numerical reduction developed in
Section~\ref{sec:numerical-reduction} to adjoint log canonical foliated surface
singularities.

Let \((Y,\mathcal G,p)\) be a germ of a foliated surface singularity, and fix
\[
0<\epsilon<\frac13.
\]
Let
\[
\pi:(X,\mathcal F,E=\sum_iE_i)\to (Y,\mathcal G,p)
\]
be the minimal resolution (cf.~Definition~\ref{def:min-resolution}), where
\(E=\sum_iE_i
\)
is the reduced exceptional divisor.  We set
\[
K_{\epsilon}:=K_{(X,\mathcal F,E),\epsilon}
=
(K_{\mathcal F}+E^{\mathrm{n\mbox{-}inv}})
+\epsilon(K_X+E).
\]

By Definition~\ref{def:adj-lc-sing}, the germ \((Y,\mathcal G,p)\) is
\(\epsilon\)-adjoint log canonical if and only if the divisor supported on \(E\)
whose intersections with the components of \(E\) agree with those of \(K_{\epsilon}\) is
effective.  Equivalently,
\[
(Y,\mathcal G,p)\text{ is }\epsilon\text{-adjoint log canonical}
\quad\Longleftrightarrow\quad
E\text{ is a }(K_{\epsilon})^{\ge0}\text{-graph}.
\]

Thus the problem is an instance of the numerical effectivity problem of Section~\ref{sec:numerical-reduction}. 
We solve it by running the negative-vertex reduction for $K_{\epsilon}$:
\begin{enumerate}
\item compute the local possibilities for components with \(K_{\epsilon}\cdot C\le0\);
\item extract the special \((K_{\epsilon})^{>0}\)-chains generated by negative vertices;
\item peel off the maximal special chains and obtain a residual divisor \(K_{\epsilon,1}\);
\item analyze the residual components satisfying \(K_{\epsilon,1}\cdot C\le0\).
\end{enumerate}
Steps (2) and (3) use the extension criterion and the boundary contribution
estimate from Lemma~\ref{lem:extension-criterion},
Proposition~\ref{prop:special-chain-estimate}, and
Corollary~\ref{cor:boundary-contribution}.

For a fixed \(\epsilon\), the reduction tracks the 
\(K_\epsilon\)-negative vertices.  
After the forced special chains have been peeled off, the residual intersections
are the remaining quantities to be controlled.  As \(\epsilon\) varies, one of
these residual intersections may become zero for the first time.  This first
critical value is the wall: a previously excluded pattern becomes admissible,
and a new configuration enters the list.

\subsection{Local numerical input: negative and zero vertices}
\label{subsec:local-input}

For a component \(C\subset E\), write
\[
d(C):=(E-C)\cdot C.
\]
Following the notation of $\iota(C)$ as in \eqref{equ:def-iota(E)}, we also set
\[
d_1(C):=(E^{\mathrm{n\mbox{-}inv}}-\iota(C)C)\cdot C,
\qquad
d_2(C):=(E^{\mathrm{inv}}-(1-\iota(C))C)\cdot C,
\]
so that
\[
d(C)=d_1(C)+d_2(C).
\]
Here \(d_1(C)\) and \(d_2(C)\) measure respectively the non-invariant and
invariant branches meeting \(C\).

\begin{lemma}[Non-invariant vertices]
\label{lem:non-inv-local}
Let \(C\subset E\) be a non-\(\mathcal F\)-invariant component. Then
\[
K_{\epsilon}\cdot C
=
\epsilon(2p_a(C)-2+d(C))
+\operatorname{tang}(\mathcal F,C)+d_1(C).
\]
If \(K_{\epsilon}\cdot C\le0\), then
\(\operatorname{tang}(\mathcal F,C)=d_1(C)=0\).
Moreover, one of the following holds:
\begin{enumerate}
\item \(C\) is a smooth elliptic curve. In this case
\(d(C)=K_{\epsilon}\cdot C=0\).
\item \(C\) is a smooth rational curve. In this case
\[
d(C)=d_2(C)\le2,
\qquad
K_{\epsilon}\cdot C=\epsilon(d(C)-2).
\]
Equivalently,
\[
\begin{array}{c|c}
d(C)=d_2(C) & K_{\epsilon}\cdot C\\
\hline
0 & -2\epsilon\\
1 & -\epsilon\\
2 & 0.
\end{array}
\]
\end{enumerate}
\end{lemma}

\begin{proof}
We compute
\begin{align*}
K_{\epsilon}\cdot C
&=(K_{\cF}+E^{\rm n\!-\!inv})\cdot C+\epsilon (K_X+E)\cdot C \\
&=(K_{\cF}+C)\cdot C+\epsilon (K_X+C)\cdot C
 +(E^{\rm n\!-\!inv}-C)\cdot C+\epsilon (E-C)\cdot C .
\end{align*}
By Proposition~\ref{prop:tang-index}, adjunction, and the definitions of
\(d_1(C)\) and \(d(C)\), this gives the formula.

The classification follows immediately from \(K_{\epsilon}\cdot C\le0\), the inequalities
\[
\operatorname{tang}(\mathcal F,C)\ge0,\qquad d_1(C)\ge0,
\]
and the assumption \(0<\epsilon<1/3\).  Indeed, either \(p_a(C)=1\), in which
case all remaining terms vanish and \(d(C)=0\), or \(p_a(C)=0\), in which case
\(\operatorname{tang}(\mathcal F,C)=d_1(C)=0\) and \(d(C)\le2\).  This gives the
two cases above.
\end{proof}

\begin{lemma}[Invariant vertices]
\label{lem:inv-local}
Let \(C\subset E\) be an \(\mathcal F\)-invariant component. Then
\[
K_{\epsilon}\cdot C
=
(1+\epsilon)(2p_a(C)-2+d(C))
+{\rm Z}(\mathcal F,C)-d_2(C).
\]
If \(K_{\epsilon}\cdot C\le0\), then one of the following holds.

\begin{enumerate}
\item \(C\) is an irreducible elliptic Gorenstein leaf. In this case
\(d(C)=K_{\epsilon}\cdot C=0\).

\item \(C\) is a smooth rational curve. In this case the possible numerical
data are exactly:
\[
\begin{array}{c|c|c|c|c}
d(C) & d_1(C) & d_2(C) & {\rm Z}(\mathcal F,C) & K_{\epsilon}\cdot C\\
\hline
0 & 0 & 0 & 1 & -(1+2\epsilon)\\
  & 0 & 0 & 2 & -2\epsilon\\
\hline
1 & 1 & 0 & 1 & -\epsilon\\
  & 0 & 1 & 1 & -(1+\epsilon)\\
  & 0 & 1 & 2 & -\epsilon\\
\hline
2 & 1 & 1 & 1 & 0\\
  & 0 & 2 & 2 & 0.
\end{array}
\]
Here \({\rm Z}(\mathcal F,C)=1,2\) correspond respectively to
\((-1)\)- and \((-2)\)-\(\mathcal F\)-curves.
\end{enumerate}
\end{lemma}

\begin{proof}
We compute
\begin{align*}
K_{\epsilon}\cdot C
&=(K_{\mathcal F}+E^{\rm n\!-\!inv})\cdot C+\epsilon(K_X+E)\cdot C\\
&=K_{\mathcal F}\cdot C+\epsilon(K_X+C)\cdot C
  +E^{\rm n\!-\!inv}\cdot C+\epsilon(E-C)\cdot C\\
&=K_{\mathcal F}\cdot C+\epsilon(2p_a(C)-2)+d_1(C)+\epsilon d(C)\\
&=K_{\mathcal F}\cdot C+\epsilon(2p_a(C)-2)+(1+\epsilon)d(C)-d_2(C).
\end{align*}
By Proposition~\ref{prop:Z-index-and-CS-index},
\[
K_{\mathcal F}\cdot C=2p_a(C)-2+{\rm Z}(\mathcal F,C),
\]
and the displayed formula follows.

Since each invariant branch meeting \(C\) contributes at least \(1\) to
\({\rm Z}(\mathcal F,C)\) (cf.~Remark~\ref{rem:local-Z-values}), we have
\({\rm Z}(\mathcal F,C)\ge d_2(C)\).  Together with \(0<\epsilon<1/3\), the
inequality \(K_{\epsilon}\cdot C\le0\) forces either \(p_a(C)=1\) and all
remaining terms vanish, or \(p_a(C)=0\) and \(d(C)\le2\).
The first case gives an irreducible elliptic
Gorenstein leaf.  In the smooth rational case, checking \(d(C)=0,1,2\) gives exactly
the table.
\end{proof}

\begin{corollary}[Gap principle]
\label{cor:gap}
Let \(C\subset E\) be a component with \(d(C)\ge1\).
\begin{enumerate}
\item If \(C\) is non-\(\mathcal F\)-invariant and
\(0\le K_{\epsilon}\cdot C<\epsilon\),
then \(K_{\epsilon}\cdot C=0\).
\item If \(C\) is \(\mathcal F\)-invariant and
\(0\le K_{\epsilon}\cdot C<\frac{1+\epsilon}{2}\),
then \(K_{\epsilon}\cdot C=0\).
\end{enumerate}
\end{corollary}

\begin{proof}
This follows directly from the two local formulas above and the assumption $0<\epsilon<1/3$.
\end{proof}

The local input has two immediate consequences for the reduction.  First,
negative vertices are the only possible starting points of the peeling process,
and every non-isolated negative vertex is an end vertex.  Second, the estimates
of Section~\ref{sec:numerical-reduction} control the boundary contribution of
any special chain by its first negative vertex.  In the range
\(0<\epsilon<1/3\), the gap principle further forces the later components of the
forced chains to be \(K_\epsilon\)-trivial, leading to the finite list below.

\subsection{Forced special chains}
\label{subsec:terminal-blocks}

Before extracting forced chains, we first remove the two base cases of the
reduction: the irreducible case and the case where the configuration is already
\(K_\epsilon\)-trivial.

\begin{lemma}[Base cases of the reduction]
\label{lem:base-cases}
Let \(E\) be a \((K_\epsilon)^{\ge0}\)-graph.
\begin{enumerate}
\item If \(E\) is irreducible, then \(E\) is one of the \(d=0\) cases in
Lemmas~\ref{lem:non-inv-local} and~\ref{lem:inv-local}.
\item If \(E\) is a \((K_\epsilon)^{=0}\)-graph and has more than one component,
then \(E\) is a cycle of \((-2)\)-\(\mathcal F\)-curves.
\end{enumerate}
\end{lemma}

\begin{proof}
The first statement is immediate from the local lists.  For the second, if
\(K_{\epsilon}\cdot C=0\) for every component \(C\subset E\), then the local lists force
every component to be a smooth rational curve with \(d(C)=2\).  Hence the dual
graph is a cycle.  The separatrix theorem, Theorem~\ref{thm:separatrix}, excludes the non-invariant
curve and the \((-1)\)-\(\mathcal F\)-curve in such a cycle, so all
components are \((-2)\)-\(\mathcal F\)-curves.
\end{proof}

We may therefore assume that \(E\) is reducible and not a
\((K_\epsilon)^{=0}\)-graph.  Since \(E\) is a \((K_\epsilon)^{\ge0}\)-graph, the
negativity criterion forces the existence of a component with
\(K_\epsilon\cdot C<0\).  By Lemmas~\ref{lem:non-inv-local} and
\ref{lem:inv-local}, such a component is an end vertex.  It therefore generates
a unique direction along which negativity can propagate.

\begin{proposition}[Forced special chains]
\label{prop:elementary-special-chains}
Let
\(
S=C_1+\cdots+C_r
\)
be a  special \((K_{\epsilon})^{>0}\)-chain (cf.~Definition~\ref{def:special-chain}) in \(E\), with first curve \(C_1\).
Then \(S\) is one of the following types.

\begin{enumerate}
\item[(A)] \(C_1\) is a \((-1)\)-\(\mathcal F\)-curve, and every \(C_i\) for
\(i\ge2\) is a \((-2)\)-\(\mathcal F\)-curve.  In this case
\(K_{\epsilon}\cdot C_1=-(1+\epsilon)\).

\item[(B)] Every \(C_i\) is a \((-2)\)-\(\mathcal F\)-curve.  In this case
\(K_{\epsilon}\cdot C_1=-\epsilon\).

\item[(C)] Every \(C_i\) for \(i<r\) is a \((-2)\)-\(\mathcal F\)-curve, and
\(C_r\) is a \((-1)\)-\(\mathcal F\)-curve.  In this case
\(K_{\epsilon}\cdot C_1=-\epsilon\).

\item[(D)] There is a unique non-\(\mathcal F\)-invariant component \(C_k\) in \(S\) with
\[
\operatorname{tang}(\mathcal F,C_k)=0,\qquad p_a(C_k)=0.
\]
The adjacent components \(C_{k-1}\) and \(C_{k+1}\), when they exist, are
\((-1)\)-\(\mathcal F\)-curves, and all remaining components are
\((-2)\)-\(\mathcal F\)-curves.  In this case
\(K_{\epsilon}\cdot C_1=-\epsilon\).
\end{enumerate}

Moreover, \(K_{\epsilon}\cdot C_i=0\) for every \(i\ge2\).  The unused valency
of \(S\) is at the last component \(C_r\).  More precisely,
\(\partial_E S=\{C\}\)
for a unique component \(C\subset E-S\), and
\(C\cdot S=C\cdot C_r=1\).

Furthermore, if \(S\) is of type {\rm (A)}, {\rm (B)}, or {\rm (D)}, then \(C\)
is \(\mathcal F\)-invariant; if \(S\) is of type {\rm (C)}, then \(C\) is
non-\(\mathcal F\)-invariant.
\end{proposition}

\begin{proof}
The first curve \(C_1\) satisfies \(K_{\epsilon}\cdot C_1<0\).  Since the
irreducible case has already been removed, \(C_1\) is not isolated.  By
Lemmas~\ref{lem:non-inv-local} and~\ref{lem:inv-local}, every non-isolated
negative vertex is an end vertex; hence
\(d(C_1)=1\).
Thus the local lists give only two possible values:
\[
K_{\epsilon}\cdot C_1=-(1+\epsilon)
\qquad\text{or}\qquad
K_{\epsilon}\cdot C_1=-\epsilon.
\]

We first assume that \(K_{\epsilon}\cdot C_1=-\epsilon\).  By the local lists,
\(C_1\) is one of the following three types: a \((-1)\)-\(\mathcal F\)-curve
with \((d,d_1,d_2)=(1,1,0)\), a \((-2)\)-\(\mathcal F\)-curve with
\((d,d_1,d_2)=(1,0,1)\), or a smooth rational non-\(\mathcal F\)-invariant curve
with \(\operatorname{tang}(\mathcal F,C_1)=0\) and
\((d,d_1,d_2)=(1,0,1)\).

For \(k\ge2\), put
\(S_{k-1}=C_1+\cdots+C_{k-1}\).  
Since \(S\) is a special
\((K_{\epsilon})^{>0}\)-chain, each initial subchain \(S_k\) is also a
special \((K_{\epsilon})^{>0}\)-chain.
Lemma~\ref{lem:extension-criterion}
and Proposition~\ref{prop:special-chain-estimate} give
\[
0\le K_{\epsilon}\cdot C_k
<
M(K_{\epsilon},S_{k-1})\cdot C_k
\le \frac{\epsilon}{\mu_k}
\le \epsilon .
\]
Hence \(K_{\epsilon}\cdot C_k=0\) for every \(k\ge2\), by
Corollary~\ref{cor:gap}.  The \(K_{\epsilon}\)-trivial vertices with positive
valency are read from Lemmas~\ref{lem:non-inv-local} and~\ref{lem:inv-local}.
They are:
\[
\begin{array}{c|c|c|c}
\text{component} & d & d_1 & d_2\\
\hline
\text{non-}\mathcal F\text{-invariant rational curve with }
\operatorname{tang}(\mathcal F,C)=0
& 2 & 0 & 2\\
(-1)\text{-}\mathcal F\text{-curve} & 2 & 1 & 1\\
(-2)\text{-}\mathcal F\text{-curve} & 2 & 0 & 2 .
\end{array}
\]
Thus every later component has valency \(2\), and its unused branch is determined
by \((d_1,d_2)\).

We now read the chain from left to right.  Starting from the chosen first curve
\(C_1\), the unused branch prescribed by the triple \((d,d_1,d_2)\) determines
the possible type of the next component.  If no non-\(\mathcal F\)-invariant
component occurs in \(S\), the chain remains in the invariant part.  Reading the
unused branches from the local list gives a chain of
\((-2)\)-\(\mathcal F\)-curves, possibly ending with one
\((-1)\)-\(\mathcal F\)-curve.  This gives types {\rm (B)} and {\rm (C)}.
If a non-\(\mathcal F\)-invariant component occurs, then all branches adjacent to
it are \(\mathcal F\)-invariant.  The invariant components adjacent to it are
\((-1)\)-\(\mathcal F\)-curves, while all remaining invariant components are
\((-2)\)-\(\mathcal F\)-curves.  This gives type {\rm (D)}.  The remaining formal
continuations are excluded by Theorem~\ref{thm:separatrix}.

It remains to consider the case
\(K_{\epsilon}\cdot C_1=-(1+\epsilon)\).
Then \(C_1\) is a \((-1)\)-\(\mathcal F\)-curve with
\((d,d_1,d_2)=(1,0,1)\), so its
unique adjacent branch is \(\mathcal F\)-invariant.  Since \(C_1\) is not
\(\mathcal F\)-exceptional on the minimal resolution,
\(\mu_2=-C_1^2\ge2\).
Therefore
\[
0\le K_{\epsilon}\cdot C_2
<
M(K_{\epsilon},C_1)\cdot C_2
\le \frac{1+\epsilon}{\mu_2}
\le \frac{1+\epsilon}{2}.
\]
By the invariant gap in Corollary~\ref{cor:gap}, we get
\(K_{\epsilon}\cdot C_2=0\). 
The local lists then leave only the \((-1)\)- and \((-2)\)-\(\mathcal F\)-curve
alternatives; the \((-1)\)-\(\mathcal F\)-curve alternative is excluded by
Theorem~\ref{thm:separatrix}.  Hence \(C_2\) is a
\((-2)\)-\(\mathcal F\)-curve.
Since \(C_2\) is not
\(\mathcal F\)-exceptional, \(C_2^2\le -2\).  Repeating the same argument
inductively gives that every \(C_k\) for \(k\ge2\) is a
\((-2)\)-\(\mathcal F\)-curve.  Hence \(S\) is of type {\rm (A)}.

Finally, the local lists give
\(d(C_1)=1\) and \(d(C_k)=2\) for \((k\ge2)\).
Thus all valencies of \(S\) are used inside the chain except for one at the last
component \(C_r\).  Hence \(S\) has a unique boundary component
\(C\in\partial_E S\), and
\[
C\cdot S=C\cdot C_r=1.
\]
The pair \((d_1(C_r),d_2(C_r))\) determines the type of this unused branch:
types {\rm (A)}, {\rm (B)}, and {\rm (D)} attach to an
\(\mathcal F\)-invariant residual component, while type {\rm (C)} attaches to a
non-\(\mathcal F\)-invariant residual component.
\end{proof}

\begin{remark}[Determinants of forced special chains]
\label{rem:determinant-one-forced-chain}
For a forced special chain \(S=C_1+\cdots+C_r\), its determinant means
\[
\det(-C_i\cdot C_j)_{1\le i,j\le r}.
\]
Forced special chains of types {\rm (B)} and {\rm (D)} may have determinant
\(1\).  This is not an empty-chain convention.  Rather, it means that such a
chain has determinant \(1\), hence can be contracted to a smooth point on the
underlying surface.  However, the induced foliation at the contraction point
need not be reduced.  Hence such a chain is not excluded by the minimality
condition in Definition~\ref{def:min-resolution}.  This is precisely the
determinant-one phenomenon for foliated blow-downs described in
\cite[pp.~7--8]{Bru15}.

In the type {\rm (B)} case, this phenomenon has the following consequence which
will be used below.  If a type {\rm (B)} chain has determinant \(1\), then after
contracting it the induced foliation has a singularity defined locally by \(\nu=x\partial_x+(ny+x^n)\partial_y\); see \cite[pp.~8]{Bru15}. 
The only
separatrix not contained in the chain is forced to pass through the component
meeting the last curve of the chain.  Thus, when such a chain is attached to a
residual component \(C\), its external separatrix is forced to be \(C\).

By contrast, forced special chains of types {\rm (A)} and {\rm (C)} have
determinant at least \(2\).  We also note that types {\rm (A)} and {\rm (C)}
may have the same underlying \(\mathcal F\)-chain, although they are different as
special \((K_{\epsilon})^{>0}\)-chains and give different boundary contributions
in Corollary~\ref{cor:elementary-chain-contribution}.
\end{remark}

\begin{corollary}[Boundary contribution of a forced chain]
\label{cor:elementary-chain-contribution}
Let \(S=\sum_{i=1}^rC_i\) be a special \((K_{\epsilon})^{>0}\)-chain, and let
\(C\subset E-S\) be an irreducible component with \(C\cdot S>0\).  Then
\(C\cdot S=C\cdot C_r=1\) and
\[
M(K_{\epsilon},S)\cdot C=
\begin{cases}
\dfrac{1+\epsilon}{n}, & \text{if }S\text{ is of type }(A),\\[0.8em]
\dfrac{\epsilon}{n}, & \text{if }S\text{ is of type }(B),(C),(D),
\end{cases}
\]
where \(n=\det(-C_i\cdot C_j)_{1\leq i,j\leq r}\).
\end{corollary}

\begin{proof}
By Proposition~\ref{prop:elementary-special-chains}, all components after the
first one are \(K_{\epsilon}\)-trivial.  The statement follows from
Corollary~\ref{cor:boundary-contribution}.
\end{proof}

The reduction itself only uses the existence of special
\((K_\epsilon)^{>0}\)-chains and the estimate of their boundary contributions by
the first negative vertex.  The gap principle is used here to make the resulting
chains explicit in the range \(0<\epsilon<1/3\).

\subsection{Interaction and simultaneous peeling}
\label{subsec:interaction-peeling}

There are two ways in which the reduction can close.  The first is immediate:
two forced chains may interact, in which case the negative vertices already
close up into one of the known foliated log canonical types.  Such
configurations require no residual wall analysis.

\begin{lemma}[Interaction of forced chains]
\label{lem:interaction}
Suppose that two special \((K_{\epsilon})^{>0}\)-chains with distinct first curves
intersect or share a component.  Then \(E\) is one of the following:
\begin{enumerate}
\item an \(\mathcal F\)-chain;
\item a chain of \((-2)\)-\(\mathcal F\)-curves;
\item an \(\mathcal F\)-star chain centered at a smooth rational
non-\(\mathcal F\)-invariant curve \(C\) with
\(\operatorname{tang}(\mathcal F,C)=0\).
\end{enumerate}
In particular, \(E\) is a \((K_{\epsilon})^{>0}\)-graph.
\end{lemma}

\begin{proof}
By Proposition~\ref{prop:elementary-special-chains}, each forced chain is of
type {\rm (A)}--{\rm (D)} and can meet the rest of \(E\) only through its last
component.  Thus two forced chains with distinct first curves can interact only
through their terminal parts; 
if they share components, then they coincide from some component onward, toward
their last components.
Reading off the possible terminal vertices of the
four types gives exactly the configurations listed above, while all other
gluings are excluded by Theorem~\ref{thm:separatrix}.  The resulting
configurations are \((K_\epsilon)^{>0}\)-graphs by Lemma~\ref{lem:neg-def}.
\end{proof}

We may therefore assume, for the purpose of detecting new adjoint
configurations, that forced chains with distinct first curves are disjoint.  The
second way in which the reduction can close is residual: after the forced chains
have been peeled off, one has to analyze the remaining residual intersections.

We now peel off all maximal forced chains simultaneously.  Let
\(\Theta_1,\ldots,\Theta_s\) be the maximal special chains of type {\rm (A)},
and let \(\Xi_1,\ldots,\Xi_t\) be the maximal special chains of types
{\rm (B)}, {\rm (C)}, and {\rm (D)}.  Set
\[
K_{\epsilon,1}
:=
K_{\epsilon}
-\sum_{i=1}^sM(K_{\epsilon},\Theta_i)
-\sum_{j=1}^tM(K_{\epsilon},\Xi_j).
\]
This is the simultaneous version of the peeling construction of
Section~\ref{subsec:numerical-projection-peeling}, applied to the disjoint
maximal special chains.

\begin{lemma}[Simultaneous peeling]
\label{lem:K1-peeling}
The exceptional divisor \(E\) is a \((K_{\epsilon,1})^{\ge0}\)-graph.  Moreover, if \(C\) is
contained in one of the peeled chains, then
\(K_{\epsilon,1}\cdot C=0\).
\end{lemma}

\begin{proof}
The assertion \(K_{\epsilon,1}\cdot C=0\) on the peeled chains follows from
Lemma~\ref{lem:peeling-identity}.  The fact that \(E\) remains a
\((K_{\epsilon,1})^{\ge0}\)-graph follows from Lemma~\ref{lem:peeling-decomposition},
applied successively to the peeled chains.
\end{proof}

\subsection{Residual components and the closing condition}
\label{subsec:residual-components}

We now analyze the residual intersections left after the simultaneous peeling,
using the residual-cluster notation of Section~\ref{subsec:boundary-residual}.
Let \(C\) be a residual component, namely a component not contained in any peeled
chain.  Its residual intersection is obtained from \(K_\epsilon\cdot C\) by
subtracting the boundary contributions of the peeled chains attached to \(C\).

Suppose that \(C\) meets \(k\) peeled chains of type {\rm (A)}, with determinants
\(n_1,\ldots,n_k\), and \(l\) peeled chains of types {\rm (B)}, {\rm (C)}, and
{\rm (D)}, with determinants \(m_1,\ldots,m_l\).  Then
\[
K_{\epsilon,1}\cdot C
=
K_\epsilon\cdot C
-\sum_{i=1}^k\frac{1+\epsilon}{n_i}
-\sum_{j=1}^l\frac{\epsilon}{m_j}.
\]
These residual intersections are the remaining quantities to be controlled
after the forced chains have been peeled off.  The following two lemmas determine
all residual components for which this intersection is non-positive.

Let
\[
\mathcal R_C
=
C+\sum_{i=1}^k\Theta_i+\sum_{j=1}^l\Xi_j
\]
be the residual cluster centered at \(C\), and write
\[
\rho:=\rho_C.
\]
Then
\[
\rho=d(C)-k-l.
\]

If \(C\) is \(\mathcal F\)-invariant, set
\[
z:={\rm Z}(\mathcal F,C)-d_2(C),
\]
which is a non-negative integer.

\begin{lemma}[Non-invariant residual components]
\label{lem:non-inv-residual}
Let \(C\) be a non-\(\mathcal F\)-invariant residual component with
\(K_{\epsilon,1}\cdot C\le0\).
Then
\[
p_a(C)=0,\qquad \operatorname{tang}(\mathcal F,C)=0,\qquad d_1(C)=0.
\]
Moreover, \(k=0\).
The possible residual data are exactly:
\[
\begin{array}{c|c|c|c}
\rho & l & K_{\epsilon,1}\cdot C & \text{determinant type}\\
\hline
2 & 0 & 0 & \text{none}\\
1 & 2 & 0 & (2,2)\\
0 & 3 & <0 &
(2,2,n),\ n\ge2,\ (2,3,3),\ (2,3,4),\ (2,3,5)\\
0 & 3 & 0 &
(2,3,6),\ (2,4,4),\ (3,3,3)\\
0 & 4 & 0 & (2,2,2,2).
\end{array}
\]
Here the determinant types refer to the adjacent peeled chains, which are all of
type {\rm (C)}.  When \(\rho>0\), every exterior branch of \(\mathcal R_C\) is
\(\mathcal F\)-invariant.
\end{lemma}

\begin{proof}
By Proposition~\ref{prop:elementary-special-chains}, a non-\(\mathcal F\)-invariant residual component can meet a peeled special
chain only through a chain of type {\rm (C)}.  Hence \(k=0\), all adjacent
peeled chains are of type {\rm (C)} and have determinants \(m_j\ge2\).  Thus
\[
K_{\epsilon,1}\cdot C
=
\operatorname{tang}(\mathcal F,C)+d_1(C)
+\epsilon\left(
2p_a(C)-2+\rho+l-\sum_{j=1}^l\frac1{m_j}
\right).
\]

Since \(C\) is a residual component after the forced chains have been peeled off,
we have \(d(C)=\rho+l>0\).
Together with \(0<\epsilon<1/3\), elementary estimates show that
\(K_{\epsilon,1}\cdot C\le0\) forces
\(
p_a(C)=\operatorname{tang}(\mathcal F,C)=d_1(C)=0\).

Consequently \(d(C)=d_2(C)=\rho+l\), and
\begin{equation}\label{equ:K1C-non-inv-residual}
K_{\epsilon}\cdot C=\epsilon(\rho+l-2),\qquad
K_{\epsilon,1}\cdot C
=
\epsilon\left(\rho+l-2-\sum_{j=1}^l\frac1{m_j}\right).
\end{equation}

Since \(C\) is not the first curve of a special chain, \(K_{\epsilon}\cdot C\ge0\).  If
\(K_{\epsilon}\cdot C=0\) and \(l>0\), then adjoining \(C\) to an adjacent peeled chain
would contradict maximality.  Hence
\[
\rho+l-2\ge0,
\]
with equality only when \(l=0\).  On the other hand,
\(K_{\epsilon,1}\cdot C\le0\) and \(m_j\ge2\) give
\[
\rho+l-2\le \sum_{j=1}^l\frac1{m_j}\le \frac l2.
\]
Thus \(2\rho+l\le4\).  Together with \(\rho+l\ge2\) and the equality condition
above, this gives
\[
(\rho,l)=(2,0),(1,2),(0,3),(0,4).
\]
For \((\rho,l)=(2,0)\), no peeled chain is attached to \(C\), and
\eqref{equ:K1C-non-inv-residual} gives \(K_{\epsilon,1}\cdot C=0\).
For \((\rho,l)=(1,2)\), the inequality forces \((m_1,m_2)=(2,2)\) and $K_{\epsilon,1}\cdot C=0$.
For \((\rho,l)=(0,3)\), it becomes
\[
\sum_{j=1}^3\frac1{m_j}\ge1,
\]
giving the strict cases \((2,2,n)\), \(n\ge2\), \((2,3,3)\), \((2,3,4)\), and
\((2,3,5)\), and the equality cases \((2,3,6)\), \((2,4,4)\), and \((3,3,3)\).
Strictness in the inequality is equivalent to \(K_{\epsilon,1}\cdot C<0\).
For \((\rho,l)=(0,4)\), it forces
\((m_1,\dots,m_4)=(2,2,2,2)\), and in this case \(K_{\epsilon,1}\cdot C=0\).

This proves the table.
\end{proof}

\begin{lemma}[Invariant residual components]
\label{lem:inv-residual}
Let \(C\) be an \(\mathcal F\)-invariant residual component with
\(K_{\epsilon,1}\cdot C\le0\).  Then \(C\) is smooth rational.  The possible residual data
are exactly:
\[
\begin{array}{c|c|c|c|c|c}
\rho & k & l & z & K_{\epsilon,1}\cdot C & \text{determinant type}\\
\hline
2 & 0 & 0 & 0 & 0 & \text{none}\\
1 & 2 & 0 & 0 & 0 & (2,2)\\
0 & 2 & 0 & 1 & <0 & (2,2)\\
0 & 2 & 0 & 1 & \le0 & (2,3)\\
0 & 2 & 1 & 0 & <0 & (2,2;\,m),\ m\ge2 .
\end{array}
\]
In the last row, \(m=\det(-\Xi_1)\), where \(\Xi_1\) is the additional
type {\rm (B)} chain counted by \(l=1\). 
The row containing determinant type \((2,3)\) can occur only when
\(\epsilon\ge1/5\).  We also record the following data for the rows with
\(\rho>0\): in the row \((2,0,0,0)\), \(\mathcal R_C=C\),
\({\rm Z}(\mathcal F,C)=d_2(C)\), and
\((d_1(C),d_2(C))=(1,1)\) or \((0,2)\); in the row \((1,2,0,0)\),
\(d(C)=d_2(C)={\rm Z}(\mathcal F,C)=3\).
\end{lemma}

\begin{proof}
By the boundary contribution formula, we have
\[
K_{\epsilon,1}\cdot C
=
K_{\epsilon}\cdot C-\sum_{i=1}^k\frac{1+\epsilon}{n_i}
  -\sum_{j=1}^l\frac{\epsilon}{m_j}.
\]
For an invariant component,
\[
K_{\epsilon}\cdot C=
(1+\epsilon)(2p_a(C)-2+d(C))
+{\rm Z}(\mathcal F,C)-d_2(C).
\]
By the notation fixed above, \(d(C)=\rho+k+l\) and
\(
z={\rm Z}(\mathcal F,C)-d_2(C).
\)
Then we get
\[
K_{\epsilon,1}\cdot C
=
(1+\epsilon)(2p_a(C)-2+\rho+k+l)+z
-\sum_{i=1}^k\frac{1+\epsilon}{n_i}
-\sum_{j=1}^l\frac{\epsilon}{m_j}.
\]
Using \(n_i\ge2\) and \(m_j\ge1\), this gives
\[
K_{\epsilon,1}\cdot C
\ge
(1+\epsilon)(2p_a(C)-2+\rho)
+z+\frac{1+\epsilon}{2}k+l.
\]
Since \(d(C)=\rho+k+l>0\), \(z\ge0\), and \(0<\epsilon<1/3\), elementary
estimates show that \(K_{\epsilon,1}\cdot C\le0\) forces
\(p_a(C)=0\).
Thus \(C\) is smooth rational, and
\[
K_{\epsilon}\cdot C=(1+\epsilon)(\rho+k+l-2)+z.
\]
Moreover,
\begin{equation}\label{equ:K1C-inv-residual}
K_{\epsilon,1}\cdot C
=
(1+\epsilon)(\rho+k+l-2)+z
-\sum_{i=1}^k\frac{1+\epsilon}{n_i}
-\sum_{j=1}^l\frac{\epsilon}{m_j}.
\end{equation}

Since \(C\) is not the first curve of a special chain, \(K_{\epsilon}\cdot C\ge0\).  If
\(K_{\epsilon}\cdot C=0\) and \(k+l>0\), then adjoining \(C\) to an adjacent peeled chain
would contradict maximality.  Hence
\[
(1+\epsilon)(\rho+k+l-2)+z\ge0,
\]
with equality only when \(k=l=0\).  Moreover, the separatrix theorem (Theorem~\ref{thm:separatrix}) gives
\[
\rho+l+z\ge1.
\]
Indeed, if \(\rho+l+z=0\), then \(\rho=l=z=0\), so
\({\rm Z}(\mathcal F,C)=d_2(C)\).  Since \(C\) meets exactly \(k\) type
{\rm (A)} chains, we have \(k\le d_2(C)\le d(C)=k\), hence
\({\rm Z}(\mathcal F,C)=d_2(C)=k\).  Thus the closed cluster
\(\mathcal R_C=C+\Theta_1+\cdots+\Theta_k\) has no separatrix leaving it,
contradicting Theorem~\ref{thm:separatrix}.

On the other hand, \(K_{\epsilon,1}\cdot C\le0\), \(n_i\ge2\), and \(m_j\ge1\) imply
\[
(1+\epsilon)\left(\rho+\frac{k}{2}-2\right)+l+z\le0.
\]
Solving these elementary inequalities in non-negative integers, together with
the equality condition above and the assumption \(0<\epsilon<1/3\), gives
\[
(\rho,k,l,z)=(2,0,0,0),(1,2,0,0),(0,2,0,1),(0,2,1,0).
\]

It remains to determine the determinant types. 

For \((\rho,k,l,z)=(2,0,0,0)\), no peeled chain is attached to \(C\), so
\(\mathcal R_C=C\), and \eqref{equ:K1C-inv-residual} gives
\(K_{\epsilon,1}\cdot C=0\).  Here \(d(C)=2\) and
\({\rm Z}(\mathcal F,C)=d_2(C)\in\{1,2\}\).  Hence \(C\) is either a
\((-1)\)-\(\mathcal F\)-curve with \(d_1(C)=d_2(C)=1\), or a
\((-2)\)-\(\mathcal F\)-curve with \(d(C)=d_2(C)=2\).

For \((\rho,k,l,z)=(1,2,0,0)\), the inequality forces
\((n_1,n_2)=(2,2)\), and hence \(K_{\epsilon,1}\cdot C=0\).  Thus \(C\)
meets two type {\rm (A)} chains of determinant \(2\).  Since \(d(C)=3\) and 
Theorem~\ref{thm:separatrix} gives \({\rm Z}(\mathcal F,C)\ge k+1=3\), we get
\(d(C)=d_2(C)={\rm Z}(\mathcal F,C)=3\).

For \((\rho,k,l,z)=(0,2,0,1)\), the inequality becomes
\[
\frac1{n_1}+\frac1{n_2}\ge\frac1{1+\epsilon}.
\]
Thus \((n_1,n_2)=(2,2)\) or \((2,3)\).  The first case gives
\(K_{\epsilon,1}\cdot C<0\).  In the second case,
\begin{equation}\label{ineq:lc-wall-origin}
K_{\epsilon,1}\cdot C
=
1-(1+\epsilon)\left(\frac12+\frac13\right)
=
\frac{1-5\epsilon}{6},
\end{equation}
so this row can occur only when \(\epsilon\ge1/5\).

For \((\rho,k,l,z)=(0,2,1,0)\), the additional peeled chain \(\Xi_1\) is
necessarily of type {\rm (B)}. 
Indeed, the type {\rm (D)} possibility would give a closed residual cluster with
no separatrix leaving it, contradicting Theorem~\ref{thm:separatrix}.
Hence
\[
K_{\epsilon,1}\cdot C
=
1+\epsilon
-(1+\epsilon)\left(\frac1{n_1}+\frac1{n_2}\right)
-\frac{\epsilon}{m_1}.
\]
Thus \((n_1,n_2)=(2,2)\) or \((2,3)\).
If \((n_1,n_2)=(2,2)\), then
\[
K_{\epsilon,1}\cdot C
=
-\frac{\epsilon}{m_1}<0.
\]
The case \(m_1=1\) is excluded by
Remark~\ref{rem:determinant-one-forced-chain}: the external separatrix of
\(\Xi_1\) is forced to be \(C\).  Hence the closed residual cluster
\(
\mathcal R_C=C+\Theta_1+\Theta_2+\Xi_1
\)
has no separatrix outside it, contradicting Theorem~\ref{thm:separatrix}.
Therefore this gives only the row \((2,2;\,m_1)\) with \(m_1\ge2\).
If \((n_1,n_2)=(2,3)\), then
\[
K_{\epsilon,1}\cdot C
=
\frac{1-5\epsilon}{6}
+\epsilon\left(1-\frac1{m_1}\right).
\]
In the range \(0<\epsilon<1/3\), the inequality
\(K_{\epsilon,1}\cdot C\le0\) forces \(m_1=1\).  But this determinant-one case
has just been excluded.  Hence no row of type \((2,3;\,1)\) occurs.

The rows with \(\rho>0\) are \(K_{\epsilon,1}\)-trivial, and strict negativity can occur only
when \(\rho=0\).  This proves the table.
\end{proof}

\begin{remark}[The log canonical wall \(1/5\)]\label{rem:lc-wall-origin}
The identity \eqref{ineq:lc-wall-origin} is the numerical origin of the wall
\(1/5\).  After the forced special chains have been peeled off, the new boundary
rows are precisely the \((2,3)\)-rows, and their residual intersection is
\[
\frac{1-5\epsilon}{6}.
\]
Thus these rows are excluded for \(0<\epsilon<1/5\).  At
\(\epsilon=1/5\), the corresponding residual intersection becomes zero, so the
boundary pattern of determinant type \((2,3)\) first enters the admissible list.
This is the wall \(1/5\).
\end{remark}

\begin{corollary}[Strict residual negativity forces closure]
\label{cor:strict-residual-closed}
Let \(C\) be a residual component.  If
\(K_{\epsilon,1}\cdot C<0\),
then
\(\rho_C=0\).
Equivalently, the residual cluster \(\mathcal R_C\) is closed in \(E\).  Since
\(E\) is connected, this means
\[
\mathcal R_C=E.
\]
\end{corollary}

\begin{proof}
This follows immediately from Lemmas~\ref{lem:non-inv-residual} and
\ref{lem:inv-residual}.  In both lists, the rows with positive residual valency
are \(K_{\epsilon,1}\)-trivial, while strict negativity can occur only when \(\rho=0\).
\end{proof}

\begin{corollary}[Residual alternatives after peeling]
\label{cor:residual-alternatives}
After the simultaneous peeling, one of the following holds:
\begin{enumerate}
\item there exists a residual component \(C\) such that
\(K_{\epsilon,1}\cdot C<0\),
and then
\(E=\mathcal R_C\);
\item no residual component has negative \(K_{\epsilon,1}\)-intersection, and then
\(E\) is a \((K_{\epsilon,1})^{=0}\)-graph.
\end{enumerate}
\end{corollary}

\begin{proof}
If there is a residual component \(C\) with
\(K_{\epsilon,1}\cdot C<0\), then
Corollary~\ref{cor:strict-residual-closed} gives
\(\mathcal R_C=E\).

Assume now that no residual component has negative
\(K_{\epsilon,1}\)-intersection.  The peeled components are
\(K_{\epsilon,1}\)-trivial by Lemma~\ref{lem:K1-peeling}.  Hence every component
of \(E\) has non-negative \(K_{\epsilon,1}\)-intersection.  Since \(E\) is a
\((K_{\epsilon,1})^{\ge0}\)-graph, the negativity criterion forces
\(M(K_{\epsilon,1},E)=0\).
Equivalently, \(E\) is a \((K_{\epsilon,1})^{=0}\)-graph.
\end{proof}

\subsection{The wall \(1/5\) and the classification theorem}
\label{subsec:classification}

We now assemble the classification.  By Corollary~\ref{cor:residual-alternatives},
after peeling all maximal special chains, either a closed residual cluster remains,
or the whole configuration is \(K_{\epsilon,1}\)-trivial.  Together with the
interaction alternatives of Lemma~\ref{lem:interaction}, this exhausts
all possibilities.

\begin{theorem}[Classification of \(\epsilon\)-adjoint log canonical singularities]
\label{thm:adjoint-lc-classification}
Fix \(\epsilon\in(0,1/3)\).  Let \((Y,\mathcal G,p)\) be an
\(\epsilon\)-adjoint log canonical foliated surface singularity, and let
\[
\pi:(X,\mathcal F)\to(Y,\mathcal G)
\]
be the minimal resolution with reduced exceptional divisor \(E\).  Then \(E\)
is one of the following configurations.

\medskip
\noindent
{\rm\bf (I) Foliated canonical configurations.}
\begin{enumerate}
\item A chain of smooth rational \(\mathcal F\)-invariant curves, consisting of
one of the following:
\begin{enumerate}
\item an \(\mathcal F\)-chain (Definition~\ref{def:F-chain});
\item two \((-1)\)-\(\mathcal F\)-curves of self-intersection \(-2\) joined by
a bad tail (Definition~\ref{def:bad-tail});
\item a chain of \((-2)\)-\(\mathcal F\)-curves
(Definition~\ref{def:-2Fcurve}).
\end{enumerate}

\item An \(\mathcal F\)-dihedral fork (Definition~\ref{def:F-dihedral-fork}).

\item An elliptic Gorenstein leaf (Definition~\ref{def:egl}), namely either a rational
\(\mathcal F\)-invariant curve with a single node, or a cycle of
\((-2)\)-\(\mathcal F\)-curves.
\end{enumerate}

\medskip
\noindent
{\rm\bf (II) Foliated log canonical non-canonical configurations.}
\begin{enumerate}
\item An \(\mathcal F\)-star chain (Definition~\ref{def:F-star-graph}) centered at a smooth rational
non-\(\mathcal F\)-invariant curve \(C\) with
\(\operatorname{tang}(\mathcal F,C)=0\).

\item An \(\mathcal F\)-star fork (Definition~\ref{def:F-star-graph}) centered at a smooth rational
non-\(\mathcal F\)-invariant curve \(C\) with
\(\operatorname{tang}(\mathcal F,C)=0\).
Its determinant type is one of the following:
\begin{enumerate}
\item $(2,2,n)$, $n\ge2$, $(2,3,3)$, $(2,3,4)$, $(2,3,5)$;
\item $(2,3,6)$, $(2,4,4)$, $(3,3,3)$, $(2,2,2,2)$.
\end{enumerate}

\item A smooth elliptic non-\(\mathcal F\)-invariant curve \(C\) with
\(\operatorname{tang}(\mathcal F,C)=0\).
\end{enumerate}

\medskip
\noindent
{\rm \bf (III) Boundary configurations.}
There is a smooth rational \(\mathcal F\)-invariant curve \(C\) with
\({\rm Z}(\mathcal F,C)=3\).  The exceptional divisor is of the form
\[
E=C+\Theta_1+\Theta_2,
\]
where \(\Theta_1,\Theta_2\) are \(\mathcal F\)-chains
(Definition~\ref{def:F-chain}) attached to \(C\), with determinant type
\((2,3)\).
\medskip

Moreover, the following properties hold.
\begin{enumerate}
\item Type {\rm (I)} is foliated canonical, Type {\rm (II)} is foliated log
canonical but not foliated canonical, and Type {\rm (III)} is not foliated log
canonical.  In particular, Type {\rm (III)} cannot occur if
\[
0<\epsilon<\frac15.
\]

\item Among Types {\rm (I)} and {\rm (II)}, the \(\epsilon\)-adjoint klt cases are
precisely ${\rm (I\mbox{-}1a)}$, ${\rm (I\mbox{-}1b)}$, ${\rm (I\mbox{-}1c)}$,
${\rm (I\mbox{-}2)}$, ${\rm (II\mbox{-}1)}$, ${\rm (II\mbox{-}2a)}$.
The cases
${\rm (I\mbox{-}3)}$, ${\rm (II\mbox{-}2b)}$, ${\rm (II\mbox{-}3)}$
are \(\epsilon\)-adjoint log canonical but not \(\epsilon\)-adjoint klt.
\end{enumerate}
\end{theorem}

\begin{proof}
If \(E\) is irreducible or \(K_{\epsilon}\)-trivial, the conclusion follows from
Lemma~\ref{lem:base-cases} and the local lists.  These cases, including
Types {\rm (I\mbox{-}3)} and {\rm (II\mbox{-}3)}, are contained in
Types {\rm (I)} and {\rm (II)}.

Assume now that \(E\) is neither irreducible nor \(K_{\epsilon}\)-trivial.  Then
there exists a \(K_{\epsilon}\)-negative vertex, and it generates a special
\((K_{\epsilon})^{>0}\)-chain.  If two such chains with distinct first curves
interact, Lemma~\ref{lem:interaction} gives an \(\mathcal F\)-chain, a chain of
\((-2)\)-\(\mathcal F\)-curves, or an \(\mathcal F\)-star chain.  These give
Types {\rm (I\mbox{-}1a)}, {\rm (I\mbox{-}1c)}, and {\rm (II\mbox{-}1)}.

Thus we may assume that the maximal special chains with distinct first curves
are disjoint.  We peel them off simultaneously and obtain \(K_{\epsilon,1}\).
By Lemma~\ref{lem:K1-peeling}, \(E\) remains a
\((K_{\epsilon,1})^{\ge0}\)-graph, and the peeled components are
\(K_{\epsilon,1}\)-trivial.

By Corollary~\ref{cor:residual-alternatives}, there are two possibilities after
the simultaneous peeling.  First, there may exist a residual component \(C\) with
\[
K_{\epsilon,1}\cdot C<0.
\]
Then \(E=\mathcal R_C\), so only the rows with \(\rho=0\) can occur.  The rows
with \(\rho=0\) in Lemmas~\ref{lem:non-inv-residual} and
\ref{lem:inv-residual} give the usual foliated log canonical configurations: the
non-invariant rows give the \(\mathcal F\)-star forks of Type
{\rm (II\mbox{-}2a)}, the invariant row of determinant type \((2,2)\) gives Type
{\rm (I\mbox{-}1b)}, and the invariant rows of determinant type
\((2,2;\,m)\), \(m\ge2\), give Type {\rm (I\mbox{-}2)}.  The only remaining row
with \(\rho=0\), of determinant type \((2,3)\), is precisely the new boundary
configuration of Type {\rm (III)}.

Second, no residual component has negative \(K_{\epsilon,1}\)-intersection.  By
Corollary~\ref{cor:residual-alternatives}, \(E\) is a
\((K_{\epsilon,1})^{=0}\)-graph.  Hence every residual component lies in a
\(K_{\epsilon,1}\)-trivial row of Lemma~\ref{lem:non-inv-residual} or
Lemma~\ref{lem:inv-residual}.  The \(K_{\epsilon,1}\)-trivial rows with
\(\rho=0\) give the complete residual configurations: the non-invariant rows
give the star-fork cases of Type {\rm (II\mbox{-}2b)}, while the invariant
\((2,3)\)-row gives Type {\rm (III)} at the wall \(\epsilon=1/5\).

It remains to exclude the \(K_{\epsilon,1}\)-trivial rows with \(\rho>0\).
Since \(\mathcal R_C\) already contains all peeled chains meeting its center,
an exterior branch of \(\mathcal R_C\) can meet only another residual component.
Using the branch data recorded in Lemmas~\ref{lem:non-inv-residual} and
\ref{lem:inv-residual}, any closed assembly made from such residual clusters
would contain a closed invariant subcluster with no separatrix leaving it,
contradicting Theorem~\ref{thm:separatrix}.

The identification of Types {\rm (I)} and {\rm (II)} with the usual foliated
canonical and foliated log canonical configurations follows from the standard
classification recalled in Theorem~\ref{thm:foliated-lc-classification}.  Type
{\rm (III)} is the new boundary type, and is not foliated log canonical.  The
threshold statement for Type {\rm (III)} follows from
Lemma~\ref{lem:inv-residual}, or equivalently from
Remark~\ref{rem:lc-wall-origin}: these boundary configurations occur only at or
beyond the wall \(\epsilon=1/5\).

Finally, the \(\epsilon\)-adjoint klt assertion follows from the strict version
of the same numerical criterion.  Since the \(K_{\epsilon,1}\)-trivial rows with
\(\rho>0\) have been excluded, each configuration in the list arises from a
unique case of the reduction: a base case, an interaction case, or a row with
\(\rho=0\).  Reading whether the corresponding numerical projection is strictly
positive gives exactly the klt cases stated in (2), and the remaining listed
cases are log canonical but not klt.  This proves the theorem.
\end{proof}

\begin{remark}
\label{rem:wall-one-fifth-classification}
The classification is the output of the negative-vertex reduction, not a direct
enumeration of dual graphs.  As explained in Remark~\ref{rem:lc-wall-origin},
the wall \(1/5\) is the first value at which the residual analysis admits the
determinant type \((2,3)\) boundary pattern.
\end{remark}

\section{Adjoint canonical singularities and the wall \(1/4\)}
\label{sec:adjoint-canonical}

In this section we study the adjoint canonical condition.  Let
\[
\pi:(X,\mathcal F)\to(Y,\mathcal G)
\]
be the minimal resolution, and let \(E\) be the reduced exceptional divisor.  Set
\[
A_{\epsilon}:=K_{\mathcal F}+\epsilon K_X.
\]
By Definition~\ref{def:adj-lc-sing}, \(p\in(Y,\mathcal G)\) is \(\epsilon\)-adjoint canonical if and only if
\[
E\text{ is a }(A_{\epsilon})^{\ge0}\text{-graph}.
\]
Thus the adjoint canonical classification is obtained by applying an
\(A_{\epsilon}\)-effectivity filter to the configurations in
Theorem~\ref{thm:adjoint-lc-classification}.  In this sense, the wall \(1/4\)
is not obtained by a new enumeration of graphs, but by imposing an additional
effectivity test on the adjoint log canonical list.
\medskip

For an irreducible component \(C\subset E\), we shall use the following local
formulas.  If \(C\) is non-\(\mathcal F\)-invariant, then
\[
A_{\epsilon}\cdot C
=
\operatorname{tang}(\mathcal F,C)
+\epsilon(2p_a(C)-2)
-(1+\epsilon)C^2.
\]
If \(C\) is \(\mathcal F\)-invariant, then
\[
A_{\epsilon}\cdot C
=
2p_a(C)-2+{\rm Z}(\mathcal F,C)
+\epsilon(2p_a(C)-2-C^2).
\]
Both formulas follow directly from the tangency formula
(Proposition~\ref{prop:tang-index}), the \(Z\)-index formula
(Proposition~\ref{prop:Z-index-and-CS-index}), and adjunction.

\subsection{The adjoint canonical effectivity filter}
\label{subsec:Kprime-effectivity-test}

We first remove the configurations which cannot be \(\epsilon\)-adjoint canonical.

\begin{lemma}[Excluded configurations]
\label{lem:canonical-excluded-configurations}
Assume \(0<\epsilon<1/4\).  The following configurations in
Theorem~\ref{thm:adjoint-lc-classification} are not \(\epsilon\)-adjoint
canonical:
\[
{\rm (I\mbox{-}3)},\qquad
{\rm (II\mbox{-}2b)},\qquad
{\rm (II\mbox{-}3)},\qquad
{\rm (II\mbox{-}2a)},\qquad
{\rm (III)}.
\]
\end{lemma}

\begin{proof}
The first three configurations are already non-klt in the
\(\epsilon\)-adjoint log canonical classification
(Theorem~\ref{thm:adjoint-lc-classification}), and hence cannot be
\(\epsilon\)-adjoint canonical.

We next exclude Type {\rm (II\mbox{-}2a)}.  Let
\(
E=C+\Theta_1+\Theta_2+\Theta_3
\)
be of Type {\rm (II\mbox{-}2a)}, centered at a smooth rational
non-\(\mathcal F\)-invariant curve \(C\) with
\({\rm tang}(\mathcal F,C)=0\).  Since the intersection matrix is negative
definite, \(C^2\le -2\).  Thus
\[
A_{\epsilon}\cdot C
=
-2\epsilon-(1+\epsilon)C^2
\ge 2.
\]

For each attached chain \(\Theta_i\), let
\(\Theta'_i\subseteq\Theta_i\) be the maximal special
\((A_\epsilon)^{>0}\)-subchain generated from the component of \(\Theta_i\)
meeting \(C\), if it exists; otherwise set \(\Theta'_i=0\).  Put
\[
A_{\epsilon,1}
:=
A_\epsilon-\sum_i M(A_\epsilon,\Theta'_i),
\]
omitting the summand when \(\Theta'_i=0\).  If \(E\) were an
\((A_\epsilon)^{\ge0}\)-graph, then by the peeling decomposition \(E\) would
also be an \((A_{\epsilon,1})^{\ge0}\)-graph.  By maximality and the extension
criterion, all components different from \(C\) have non-negative
\(A_{\epsilon,1}\)-intersection.

It remains to estimate \(A_{\epsilon,1}\cdot C\).  The determinant triples in
Type {\rm (II\mbox{-}2a)} are
\[
(2,2,n),\ n\ge2,\qquad (2,3,m),\ m=3,4,5.
\]
For each \(i\), the boundary contribution of \(\Theta'_i\) to \(C\) is bounded
above by the determinant contribution of the whole attached
\(\mathcal F\)-chain:
\[
M(A_\epsilon,\Theta'_i)\cdot C\le \frac{n_i-1}{n_i};
\]
if \(\Theta'_i=0\), the left-hand side is understood to be \(0\).  Hence
\[
A_{\epsilon,1}\cdot C
\ge
2-\sum_{i=1}^3\frac{n_i-1}{n_i}.
\]
For the triples \((2,2,n)\), \(n\ge2\), the right-hand side equals
\[
2-\frac12-\frac12-\frac{n-1}{n}
=
\frac1n>0.
\]
For the triples \((2,3,m)\), \(m=3,4,5\), it is at least
\[
2-\frac12-\frac23-\frac45
=
\frac1{30}>0.
\]
Thus \(A_{\epsilon,1}\cdot C>0\), while all other components have
non-negative \(A_{\epsilon,1}\)-intersection.  This contradicts the
\((A_{\epsilon,1})^{\ge0}\)-effectivity on the negative definite configuration.
Hence Type {\rm (II\mbox{-}2a)} is excluded.

We now exclude Type {\rm (III)}.  Let \(C\) be the invariant center with
\({\rm Z}(\mathcal F,C)=3\), and let \(\Theta_1,\Theta_2\) be the two attached
\(\mathcal F\)-chains of determinant type \((2,3)\).  After relabeling, assume
\(\det(-\Theta_1)=2\) and \(\det(-\Theta_2)=3\).

Since \(\det(-\Theta_1)=2\), the chain \(\Theta_1\) consists of a single
\((-2)\)-curve \(\Gamma\) meeting \(C\).  It satisfies
\(A_\epsilon\cdot\Gamma=-1\), and hence
\[
M(A_\epsilon,\Theta_1)=\frac12\Gamma,
\qquad
M(A_\epsilon,\Theta_1)\cdot C=\frac12.
\]
For the determinant \(3\) chain \(\Theta_2\), there are two possibilities.
\begin{itemize}
\item[\rm(i)] If \(\Theta_2=\Gamma'\) is a single \((-3)\)-curve meeting \(C\),
then \(A_\epsilon\cdot\Gamma'=-(1-\epsilon)\), and
\[
M(A_\epsilon,\Theta_2)
=
\frac{1-\epsilon}{3}\Gamma',
\qquad
M(A_\epsilon,\Theta_2)\cdot C
=
\frac{1-\epsilon}{3}.
\]

\item[\rm(ii)] If \(\Theta_2=\Gamma'_1+\Gamma'_2\) is the chain of two
\((-2)\)-curves, where \(\Gamma'_2\) is the component meeting \(C\), then
\(A_\epsilon\cdot\Gamma'_1=-1\) and \(A_\epsilon\cdot\Gamma'_2=0\).
Hence
\[
M(A_\epsilon,\Theta_2)
=
\frac23\Gamma'_1+\frac13\Gamma'_2,
\qquad
M(A_\epsilon,\Theta_2)\cdot C
=
\frac13.
\]
\end{itemize}
In both cases, \(\Theta_1\) and \(\Theta_2\) are \((A_\epsilon)^{>0}\)-graphs.
Set
\[
A_{\epsilon,1}
:=
A_{\epsilon}
-
M(A_{\epsilon},\Theta_1)
-
M(A_{\epsilon},\Theta_2).
\]
If \(E\) were an \((A_{\epsilon})^{\ge0}\)-graph, then \(E\) would be an
\((A_{\epsilon,1})^{\ge0}\)-graph, and all components different from \(C\)
would have non-negative \(A_{\epsilon,1}\)-intersection.  Hence it suffices to
show \(A_{\epsilon,1}\cdot C>0\).

In case {\rm (i)}, since \(C^2\le -1\),
\[
A_\epsilon\cdot C
=
1-2\epsilon-\epsilon C^2
\ge
1-\epsilon.
\]
Thus
\begin{equation}\label{ineq:K'C-1/4}
A_{\epsilon,1}\cdot C
\ge
1-\epsilon-\frac12-\frac{1-\epsilon}{3}
=
\frac{1-4\epsilon}{6}>0.
\end{equation}
In case {\rm (ii)}, we have \(C^2\le -2\).  Indeed, if \(C^2=-1\), then the
subconfiguration \(\Gamma+C+\Gamma'_2\) has determinant \(0\), contradicting the
negative definiteness of \(E\).  Hence \(A_\epsilon\cdot C\ge1\), and
\[
A_{\epsilon,1}\cdot C
\ge
1-\frac12-\frac13
=
\frac16>0.
\]
Thus Type {\rm (III)} is also excluded.
\end{proof}

\begin{remark}[The canonical wall \(1/4\)]
\label{rem:canonical-wall-origin}
The inequality \eqref{ineq:K'C-1/4} is the canonical analogue of
Remark~\ref{rem:lc-wall-origin}.  In the single \((-3)\)-chain case, after
imposing the \(A_{\epsilon}\)-effectivity filter, the residual intersection at
the center is bounded below by
\[
\frac{1-4\epsilon}{6},
\]
with equality if and only if \(C^2=-1\).  Hence Type {\rm (III)} is excluded
for \(0<\epsilon<1/4\), and this lower bound vanishes exactly at
\(\epsilon=1/4\).
\end{remark}

We now record the restrictions on the configurations which survive this
exclusion.

\begin{lemma}[Surviving configurations]
\label{lem:canonical-surviving-configurations}
Assume \(0<\epsilon<1/4\).  Among the remaining configurations in
Theorem~\ref{thm:adjoint-lc-classification}, if \(E\) is an
\((A_{\epsilon})^{\ge0}\)-graph, then the following restrictions hold.
\begin{enumerate}
\item If \(E=C_1+\cdots+C_r\) is a chain of
\((-2)\)-\(\mathcal F\)-curves of Type {\rm (I\mbox{-}1c)}, then exactly one
of the following occurs:
\begin{enumerate}
\item every component has self-intersection \(-2\).  In this case \(p\in Y\)
is a Du Val surface singularity, and the singularity is
\(\epsilon\)-adjoint canonical but not \(\epsilon\)-adjoint terminal;

\item \(E\) contracts to a smooth surface point.  In this case the induced
foliation \(\mathcal G\) is canonical at \(p\), and the singularity is
\(\epsilon\)-adjoint terminal.
\end{enumerate}

\item If \(E\) is a bad-tail chain of Type {\rm (I\mbox{-}1b)} or an
\(\mathcal F\)-dihedral fork of Type {\rm (I\mbox{-}2)}, then every component
of \(E\) has self-intersection \(-2\).

\item If \(E\) is an \(\mathcal F\)-star chain of Type {\rm (II\mbox{-}1)},
centered at the unique non-\(\mathcal F\)-invariant curve \(C\), then
\(C^2=-1\), and \(E\) has at least two components.
\end{enumerate}
\end{lemma}

\begin{proof}
(1) For a \((-2)\)-\(\mathcal F\)-curve \(C_i\), the local formula gives
\[
A_{\epsilon}\cdot C_i=\epsilon(-2-C_i^2).
\]
If \(C_i^2\le -2\) for all \(i\), then \(A_{\epsilon}\cdot C_i\ge0\) for every
\(i\).  Since \(E\) is negative definite and an
\((A_{\epsilon})^{\ge0}\)-graph, the negativity criterion forces
\(M(A_\epsilon,E)=0\).  Hence \(A_\epsilon\cdot C_i=0\) for every \(i\), so
\(C_i^2=-2\) for all \(i\).  This gives the Du Val case.

Suppose now that some component has self-intersection \(-1\).  Such a component
must be an end component; otherwise it would be \(\mathcal F\)-exceptional, and
there is at most one such component.  Contracting the maximal tail ending at
this \((-1)\)-curve reduces the remaining chain to the previous case.
Equivalently, up to reversing the chain,
\[
(-C_1^2,\ldots,-C_r^2)=(1,2,\ldots,2,3,2,\ldots,2).
\]
The contraction is smooth on the surface, the induced foliation is canonical,
and the numerical projection is strictly effective.  Hence the singularity is
\(\epsilon\)-adjoint terminal.

(2) Consider a bad-tail chain or an \(\mathcal F\)-dihedral fork.  Let \(C\) be
the bad tail or the fork center, and let \(\Gamma_1,\Gamma_2\) be the two
adjacent \((-1)\)-\(\mathcal F\)-curves of self-intersection \(-2\).  The center
satisfies \(C^2\le -2\), and the remaining components, if any, are
\((-2)\)-\(\mathcal F\)-curves with self-intersection at most \(-2\).  Since
\[
M(A_{\epsilon},\Gamma_i)=\frac12\Gamma_i,\qquad i=1,2,
\]
set
\[
A_{\epsilon,1}:=A_{\epsilon}-\frac12\Gamma_1-\frac12\Gamma_2.
\]
If \(E\) is an \((A_{\epsilon})^{\ge0}\)-graph, then \(E\) is an
\((A_{\epsilon,1})^{\ge0}\)-graph.  Moreover,
\[
A_{\epsilon,1}\cdot C=\epsilon(-2-C^2),
\qquad
A_{\epsilon,1}\cdot D=\epsilon(-2-D^2)
\]
for every remaining component \(D\), and
\(A_{\epsilon,1}\cdot\Gamma_i=0\).  Thus every component of \(E\) has
non-negative \(A_{\epsilon,1}\)-intersection.  The negativity criterion forces
\(M(A_{\epsilon,1},E)=0\).  Hence all these intersections vanish, and every
component of \(E\) has self-intersection \(-2\).  The resulting surface
singularity is Du Val, and the numerical projection is non-strict; hence the
singularity is not \(\epsilon\)-adjoint terminal.

(3) Let \(E\) be an \(\mathcal F\)-star chain centered at the
non-\(\mathcal F\)-invariant curve \(C\).  Write
\[
E=C+\Theta_1+\cdots+\Theta_l,\qquad l\le2,
\]
where the \(\Theta_i\) are the attached \(\mathcal F\)-chains.  For each \(i\),
let \(\Theta'_i\subseteq\Theta_i\) be the maximal subchain, starting from the
component meeting \(C\), such that \(\Theta'_i\) is an
\((A_\epsilon)^{\ge0}\)-graph.  Put
\[
A_{\epsilon,1}
:=
A_\epsilon-\sum_{i=1}^l M(A_\epsilon,\Theta'_i),
\]
omitting the summand when \(\Theta'_i=0\).  Then \(E\) is an
\((A_{\epsilon,1})^{\ge0}\)-graph, and all components different from \(C\) have
non-negative \(A_{\epsilon,1}\)-intersection.

Assume \(C^2\le -2\).  Since \({\rm tang}(\mathcal F,C)=0\),
\[
A_\epsilon\cdot C=-2\epsilon-(1+\epsilon)C^2\ge2.
\]
Each nonzero \(\Theta'_i\) contributes strictly less than \(1\) to \(C\).  Since
\(l\le2\), we get
\[
A_{\epsilon,1}\cdot C
=
A_\epsilon\cdot C-\sum_{i=1}^l M(A_\epsilon,\Theta'_i)\cdot C
>
2-l
\ge0.
\]
Thus every component has non-negative \(A_{\epsilon,1}\)-intersection, and the
intersection at \(C\) is positive.  This contradicts the
\((A_{\epsilon,1})^{\ge0}\)-effectivity on the negative definite configuration.
Therefore \(C^2=-1\).

If no chain is attached to \(C\), then \(E=C\) and
\[
A_\epsilon\cdot C=1-\epsilon>0,
\]
which contradicts the \((A_\epsilon)^{\ge0}\)-effectivity on the negative
definite curve \(C\).  Hence at least one \(\mathcal F\)-chain is attached to
\(C\), and \(E\) has at least two components.
\end{proof}

\subsection{Classification below \(1/4\)}
\label{subsec:canonical-classification}

\begin{theorem}[Classification of adjoint canonical singularities below \(1/4\)]
\label{thm:adjoint-canonical-classification}
Fix $\epsilon\in(0,1/4)$.
Let \(p\in(Y,\mathcal G)\) be an \(\epsilon\)-adjoint canonical foliated surface
singularity, and let
\[
\pi:(X,\mathcal F)\to(Y,\mathcal G)
\]
be the minimal resolution with reduced exceptional divisor \(E\).  Then \(E\) is
one of the following configurations:
\begin{enumerate}
\item an \(\mathcal F\)-chain (Definition~\ref{def:F-chain});

\item a chain of \((-2)\)-\(\mathcal F\)-curves
(Definition~\ref{def:-2Fcurve});

\item two \((-1)\)-\(\mathcal F\)-curves of self-intersection \(-2\) joined by a
bad tail (Definition~\ref{def:bad-tail}).  Moreover, the bad tail has
self-intersection \(-2\);

\item an \(\mathcal F\)-dihedral fork (Definition~\ref{def:F-dihedral-fork}), all of whose
components have self-intersection \(-2\);

\item a non-trivial \(\mathcal F\)-star chain
(Definition~\ref{def:F-star-graph}) centered at a smooth rational
non-\(\mathcal F\)-invariant curve \(C\) with
\(\operatorname{tang}(\mathcal F,C)=0\) and \(C^2=-1\).
\end{enumerate}

Moreover:
\begin{enumerate}
\item[\rm (i)] Types {\rm (1)}--{\rm (4)} are foliated canonical, and Type
{\rm (5)} is foliated log canonical.

\item[\rm (ii)] In Type {\rm (2)}, either \(p\in Y\) is Du Val and the
singularity is not \(\epsilon\)-adjoint terminal, or the contraction of the chain
is smooth on the surface and the induced foliation is canonical (but not reduced).

\item[\rm (iii)] Types {\rm (3)} and {\rm (4)} are Du Val surface singularities
and are not \(\epsilon\)-adjoint terminal.
\end{enumerate}

\end{theorem}

\begin{proof}
An \(\epsilon\)-adjoint canonical singularity is \(\epsilon\)-adjoint log
canonical, so Theorem~\ref{thm:adjoint-lc-classification} applies.  The excluded
configurations are removed by
Lemma~\ref{lem:canonical-excluded-configurations}.  The remaining configurations
are exactly the five types listed above, with the additional restrictions supplied
by Lemma~\ref{lem:canonical-surviving-configurations}.

Types {\rm (1)}--{\rm (4)} are foliated canonical, and Type {\rm (5)} is
foliated log canonical, by the foliated log canonical classification recalled in
Theorem~\ref{thm:foliated-lc-classification}.
\end{proof}

\begin{corollary}[Canonical-to-log stability below \(1/4\)]
\label{cor:adjoint-canonical-stability}
Fix $\epsilon \in (0,1/4)$. Suppose $p$ is an $\epsilon$-adjoint canonical singularity of $(Y,\cG)$.
Then 
$p$ is log canonical as a foliated singularity and klt as a surface singularity.
\end{corollary}
\begin{proof}
It follows directly from Theorem~\ref{thm:adjoint-canonical-classification}, together with the classification of lc surface singularities (Theorem~\ref{thm:lc-surface}).
\end{proof}

\begin{remark}\label{remk:1/4sharp}
The bound \(1/4\) is sharp: at \(\epsilon=1/4\), a boundary configuration of
Type {\rm (III)} can be \(\epsilon\)-adjoint canonical but not foliated log
canonical; see Example~\ref{ex:e=1/4}.
\end{remark}

\subsection{Applications to the negative part and the adjoint MMP}
\label{subsec:canonical-applications}
We also record the corresponding form of the negative part in the Zariski
decomposition.

\begin{proposition}[Adjoint negative part below \(1/4\)]
\label{prop:adjoint-negative-part}
Fix $\epsilon\in(0,1/4)$.
Let \((X,\mathcal F)\) be a relatively minimal foliated surface (cf.~Definition~\ref{def:relatively-minimal}).  
Assume that \(K_{\mathcal F}+\epsilon K_X\) is pseudo-effective, and write its Zariski
decomposition as
\[
K_{\mathcal F}+\epsilon K_X=P_\epsilon+N_\epsilon.
\]
Then every connected component of \(\operatorname{Supp}N_\epsilon\) is one of
the following:
\begin{enumerate}
\item an \(\mathcal F\)-chain (cf.~Definition~\ref{def:F-chain});
\item a non-trivial \(\mathcal F\)-star chain (cf.~Definition~\ref{def:F-star-graph}) centered at a smooth rational
non-\(\mathcal F\)-invariant curve \(C\) satisfying
\(\operatorname{tang}(\mathcal F,C)=0\)
and \(C^2=-1\).
\end{enumerate}
\end{proposition}

\begin{proof}
Each connected component of \(\operatorname{Supp}N_\epsilon\)
is a \((K_{\mathcal F}+\epsilon K_X)^{>0}\)-graph, hence gives an \(\epsilon\)-adjoint terminal contraction.
By Theorem~\ref{thm:adjoint-canonical-classification} and
Lemma~\ref{lem:canonical-surviving-configurations}, the terminal possibilities
below \(1/4\) are the \(\mathcal F\)-chain case, the terminal subcase of Type
{\rm (2)}, and the non-trivial \(\mathcal F\)-star chain of Type {\rm (5)}.

The terminal subcase of Type {\rm (2)} contracts to a smooth surface point with
canonical induced foliation; equivalently, its exceptional chain contains an
\(\mathcal F\)-invariant \((-1)\)-curve \(L\) with
\(K_{\mathcal F}\cdot L\le0\).
This is excluded by minimality.  Types {\rm (3)} and {\rm (4)} are not
\(\epsilon\)-adjoint terminal.  Hence only the two listed types remain.
\end{proof}

For \(\epsilon=0\), the corresponding description of the negative part of
\(K_{\cF}\) is due to McQuillan; see \cite{MMcQ08} and the exposition in
\cite[Chapter~8]{Bru15}.  Proposition~\ref{prop:adjoint-negative-part} gives its
adjoint analogue for \(K_{\cF}+\epsilon K_X\) below the wall \(1/4\).
Thus the adjoint perturbation preserves the classical \(\mathcal F\)-chain blocks
and adds only the star-chain blocks in Proposition~\ref{prop:adjoint-negative-part}
{\rm (2)}.

The second possibility is not excluded uniformly in \(\epsilon\); see
Example~\ref{ex:epsilon-ad-can-NOT-can}.  However, for a fixed index of the
induced foliation, it disappears when \(\epsilon\) is sufficiently small.  For
instance, in the big case one may take the effective bound given in
\cite[Proposition~1.9]{LWX25}.

\begin{corollary}[Adjoint MMP below \(1/4\)]
\label{cor:adjoint-MMP}
Let $X$ be a smooth projective surface and $\cF$ be a foliation with canonical singularities.
Then for any $\epsilon\in(0,1/4)$, there exists a birational morphism 
$\varphi:X\to Y$ such that either
\begin{enumerate}
\item $K_{\cG}+\epsilon K_Y$ is nef, where $\cG=\varphi_*\cF$; or
\item there exists a morphism
$f:Y\to Z$
with \(\rho(Y/Z)=1\) such that
$-\bigl(K_{\cG}+\epsilon K_Y\bigr)$
is \(f\)-ample.
\end{enumerate}
Moreover, \(Y\) has klt singularities and \(\cG\) has log canonical
singularities.
\end{corollary}
\begin{proof}
Construct \(\varphi\) in two steps. First contract all \(\mathcal F\)-invariant
\((-1)\)-curves \(L\) with \(K_{\mathcal F}\cdot L\le 0\), obtaining a relatively
minimal model \((X',\mathcal F')\). Then contract the connected components of
the negative part of \(K_{\mathcal F'}+\epsilon K_{X'}\) as in Proposition~\ref{prop:adjoint-negative-part}.

If $K_{\cF}+\epsilon K_X$ is pseudo-effective, then $K_{\cG}+\epsilon K_Y$ is nef by Proposition~\ref{prop:adjoint-negative-part}.
If $K_{\cF}+\epsilon K_X$ is not pseudo-effective, then there is a curve \(C\subset Y\)
such that
\[
(K_{\cG}+\epsilon K_Y)\cdot C<0,
\qquad
C^2\ge0.
\]
By \cite[Corollary~3.17]{KM98} and \cite[Theorem~2.8]{SS23}, this gives a
contraction
\(
f:Y\to Z
\)
with \(\rho(Y/Z)=1\) such that
\(
-\bigl(K_{\cG}+\epsilon K_Y\bigr)
\)
is \(f\)-ample.
\end{proof}

\begin{corollary}[Adjoint canonical model below \(1/4\)]
\label{coro:adjoint-canonical-model}

With the notation of Corollary~\ref{cor:adjoint-MMP}, assume in addition that
$K_{\cF}+\epsilon K_X$ is big.  Then there exists a birational contraction
\[
h:(Y,\cG)\longrightarrow (Z,\cH)
\]
such that \((Z,\cH)\) is the adjoint canonical model.  Moreover, \(Z\) has klt
singularities, \(\cH\) has log canonical singularities, and
\(
K_{\cH}+\epsilon K_Z
\)
is ample.
\end{corollary}

\begin{proof}
Since \(K_{\mathcal F}+\epsilon K_X\) is big, the nef divisor
\(K_{\mathcal G}+\epsilon K_Y\) obtained in Corollary~\ref{cor:adjoint-MMP} is nef and big.
Contracting its null locus gives
a birational morphism \(h:(Y,\mathcal G)\to (Z,\mathcal H)\).
The singularities produced by this contraction are
\(\epsilon\)-adjoint canonical. Hence
Theorem~\ref{thm:adjoint-canonical-classification} gives that \(Z\) is klt and that
\(\cH\) is log canonical.
On \(Z\), the divisor \(K_{\cH}+\epsilon K_Z\) is nef and big.  Since \(Z\) has
klt, hence rational, singularities, \cite[Theorem~4.8]{Tan04} implies that
\(K_{\cH}+\epsilon K_Z\) is ample.
\end{proof}

In fact, by \cite{Tan04}, there is an explicit integer
\(\alpha=\alpha(K_{\cH}+\epsilon K_Z)\) such that
\(\alpha\cdot (K_{\cH}+\epsilon K_Z)\) is very ample; see also
\cite[Corollary~1.2]{LWX25}.
\begin{remark}
Corollaries~\ref{cor:adjoint-MMP} and~\ref{coro:adjoint-canonical-model} extend
\cite[Theorems~1.1 and~1.2]{SS23}, where the range
\(\epsilon\in(0,1/5)\) was treated.  They also complement
\cite[Theorem~1.1]{LWX25}.
\end{remark}

\section{Examples and sharpness}\label{sec:examples}

We give two examples illustrating the necessity of the new adjoint
configurations and the sharpness of the thresholds.  The first realizes the
non-trivial \(\mathcal F\)-star chain appearing in
Theorem~\ref{thm:adjoint-canonical-classification} and in
Proposition~\ref{prop:adjoint-negative-part}; the second realizes the boundary
configuration of Type {\rm (III)} in
Theorem~\ref{thm:adjoint-lc-classification}, and hence the walls \(1/5\) and
\(1/4\).

\begin{example}\label{ex:epsilon-ad-can-NOT-can}
Let $p$ be a singularity of a foliated surface $(X,\cF)$.
Assume that $X$ is smooth at $p$, and that $\cF$ is locally defined near
$p=(0,0)$ by the $1$-form
\[
\omega = ny\,\mathrm{d}x - x\,\mathrm{d}y.
\]
Consider the minimal resolution
$\sigma \colon (X',\cF') \to (X,\cF)$
over $p$ (see Figure~\ref{fig:adcanNOTcan}).

\begin{figure}[htbp]
  \centering
  \def\svgwidth{\columnwidth}
  \scalebox{0.5}{
\begingroup%
  \makeatletter%
  \providecommand\color[2][]{%
    \errmessage{(Inkscape) Color is used for the text in Inkscape, but the package 'color.sty' is not loaded}%
    \renewcommand\color[2][]{}%
  }%
  \providecommand\transparent[1]{%
    \errmessage{(Inkscape) Transparency is used (non-zero) for the text in Inkscape, but the package 'transparent.sty' is not loaded}%
    \renewcommand\transparent[1]{}%
  }%
  \providecommand\rotatebox[2]{#2}%
  \newcommand*\fsize{\dimexpr\f@size pt\relax}%
  \newcommand*\lineheight[1]{\fontsize{\fsize}{#1\fsize}\selectfont}%
  \ifx\svgwidth\undefined%
    \setlength{\unitlength}{143.13019122bp}%
    \ifx\svgscale\undefined%
      \relax%
    \else%
      \setlength{\unitlength}{\unitlength * \real{\svgscale}}%
    \fi%
  \else%
    \setlength{\unitlength}{\svgwidth}%
  \fi%
  \global\let\svgwidth\undefined%
  \global\let\svgscale\undefined%
  \makeatother%
  \begin{picture}(1,0.10751517)%
    \lineheight{1}%
    \setlength\tabcolsep{0pt}%
    \put(0.49462423,0.02317965){\color[rgb]{0,0,0}\makebox(0,0)[lt]{\lineheight{1.25}\smash{\begin{tabular}[t]{l}$\cdots\cdots$\end{tabular}}}}%
    \put(0,0){\includegraphics[width=\unitlength,page=1]{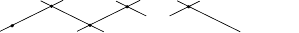}}%
    \put(0.22227328,0.06642696){\color[rgb]{0,0,0}\makebox(0,0)[lt]{\lineheight{1.25}\smash{\begin{tabular}[t]{l}$\bar{E}_2(-2)$\end{tabular}}}}%
    \put(0.07807406,0.02115494){\color[rgb]{0,0,0}\makebox(0,0)[lt]{\lineheight{1.25}\smash{\begin{tabular}[t]{l}$\bar{E}_1(-2)$\end{tabular}}}}%
    \put(0.69511248,0.05972004){\color[rgb]{0,0,0}\makebox(0,0)[lt]{\lineheight{1.25}\smash{\begin{tabular}[t]{l}$\bar{E}_{n-1}(-2)$\end{tabular}}}}%
    \put(0,0){\includegraphics[width=\unitlength,page=2]{adcanNOTcan.pdf}}%
    \put(0.85679886,0.04162384){\color[rgb]{0,0,0}\makebox(0,0)[lt]{\lineheight{1.25}\smash{\begin{tabular}[t]{l}$E_n(-1)$\end{tabular}}}}%
  \end{picture}%
\endgroup%
}
  \caption{Resolution graph in Example~\ref{ex:epsilon-ad-can-NOT-can}.}
  \label{fig:adcanNOTcan}
\end{figure}

Denote by $q_i$ (resp.\ $E_i$), $i=1,\dots,n$, the successive blow-up points
(resp.\ exceptional curves). In this case, we have
\[
l(q_i) = a(q_i) = 1, \quad i \le n-1, \qquad\text{and}\qquad l(q_n) = 2 > a(q_n) = 1.
\]
In particular, each $E_i$ is $\cF'$-invariant for $i \le n-1$, while $E_n$ is not $\cF'$-invariant.
A straightforward computation yields
\[
K_{\cF'}= \sigma^*(K_{\cF}) - E_n, \qquad
K_{X'}= \sigma^*(K_X) + \sum_{i=1}^{n-1}i\bar{E}_i + n E_n.
\]
Hence,
\[
K_{\cF'} + \epsilon K_{X'} = \sigma^*(K_{\cF} + \epsilon K_X) + \epsilon \sum_{i=1}^{n-1} i \bar{E}_i + (n \epsilon - 1) E_n.
\]

Thus, for every fixed \(\epsilon>0\), choosing \(n\) sufficiently large gives an
\(\epsilon\)-adjoint canonical singularity which is not foliated canonical.
\end{example}

\begin{remark}
Example~\ref{ex:epsilon-ad-can-NOT-can} corresponds to Type (5) in Theorem~\ref{thm:adjoint-canonical-classification}, 
which is the only configuration in which an $\epsilon$-adjoint canonical singularity fails to be foliated canonical for sufficiently small $\epsilon$.
\end{remark}

\begin{example}\label{ex:e=1/4}
Let $p$ be a singularity of a foliated surface $(X,\cF)$.
Assume that $X$ is smooth at $p$, and that $\cF$ is locally defined near
$p=(0,0)$ by the $1$-form
\[
\omega = x\,\mathrm{d}x + y^2\,\mathrm{d}y.
\]

Consider the minimal resolution
$\sigma \colon (X',\cF') \to (X,\cF)$
over $p$ (see Figure~\ref{fig:1/4KX}).

\begin{figure}[htbp]
  \centering
  \def\svgwidth{\columnwidth}
  \scalebox{0.4}{
\begingroup%
  \makeatletter%
  \providecommand\color[2][]{%
    \errmessage{(Inkscape) Color is used for the text in Inkscape, but the package 'color.sty' is not loaded}%
    \renewcommand\color[2][]{}%
  }%
  \providecommand\transparent[1]{%
    \errmessage{(Inkscape) Transparency is used (non-zero) for the text in Inkscape, but the package 'transparent.sty' is not loaded}%
    \renewcommand\transparent[1]{}%
  }%
  \providecommand\rotatebox[2]{#2}%
  \newcommand*\fsize{\dimexpr\f@size pt\relax}%
  \newcommand*\lineheight[1]{\fontsize{\fsize}{#1\fsize}\selectfont}%
  \ifx\svgwidth\undefined%
    \setlength{\unitlength}{86.31763311bp}%
    \ifx\svgscale\undefined%
      \relax%
    \else%
      \setlength{\unitlength}{\unitlength * \real{\svgscale}}%
    \fi%
  \else%
    \setlength{\unitlength}{\svgwidth}%
  \fi%
  \global\let\svgwidth\undefined%
  \global\let\svgscale\undefined%
  \makeatother%
  \begin{picture}(1,0.23187348)%
    \lineheight{1}%
    \setlength\tabcolsep{0pt}%
    \put(0.09522554,0.03615831){\color[rgb]{0,0,0}\makebox(0,0)[lt]{\lineheight{1.25}\smash{\begin{tabular}[t]{l}$\bar{E}_1(-3)$\end{tabular}}}}%
    \put(0,0){\includegraphics[width=\unitlength,page=1]{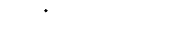}}%
    \put(0.60605719,0.10288551){\color[rgb]{0,0,0}\makebox(0,0)[lt]{\lineheight{1.25}\smash{\begin{tabular}[t]{l}$\bar{E}_2(-2)$\end{tabular}}}}%
    \put(0.34192596,0.12790865){\color[rgb]{0,0,0}\makebox(0,0)[lt]{\lineheight{1.25}\smash{\begin{tabular}[t]{l}$E_3(-1)$\end{tabular}}}}%
    \put(0,0){\includegraphics[width=\unitlength,page=2]{eK4.pdf}}%
  \end{picture}%
\endgroup%
}
  \caption{Resolution graph of the boundary example at the walls \(1/5\) and \(1/4\).}
  \label{fig:1/4KX}
\end{figure}

Denote by $q_i$ (resp.\ $E_i$) the successive blow-up points (resp.\ exceptional curves). 
In this case,
$l(q_1) = l(q_2) = 1$ and $l(q_3) = a(q_3) = 2$.
In particular, all $E_i$ are $\cF'$-invariant.
A direct computation gives
\[
K_{\cF'}= \sigma^*(K_{\cF}) - E_3, \qquad
K_{X'}= \sigma^*(K_X) + \bar{E}_1 + 2 \bar{E}_2 + 4 E_3.
\]
Hence,
\begin{align*}
K_{\cF'} + \frac{1}{4} K_{X'} 
&= \sigma^*\!\left(K_{\cF} + \frac{1}{4} K_X\right) + \frac{1}{4} \bar{E}_1 + \frac{1}{2} \bar{E}_2, \\
K_{\cF'} + \frac{1}{5} K_{X'} 
&= \sigma^*\!\left(K_{\cF} + \frac{1}{5} K_X\right) + \frac{1}{5} \bar{E}_1 + \frac{2}{5} \bar{E}_2 - \frac{1}{5} E_3.
\end{align*}

It follows that $p$ is $\tfrac{1}{4}$-adjoint canonical and $\tfrac{1}{5}$-adjoint log canonical, 
but it is not a log canonical singularity of $\cF$.
\end{example}

\begin{remark}
This example realizes both walls: it is \(\frac15\)-adjoint log canonical and
\(\frac14\)-adjoint canonical, but the foliation is not log canonical.
In fact, it corresponds to Type {\rm(III\mbox{-}1)} in Theorem~\ref{thm:adjoint-lc-classification},
i.e., the boundary configuration that is not foliated log canonical.
\end{remark}

\bigskip

\subsection*{Acknowledgements}
The author sincerely thanks Professors Shengli Tan, Jun Lu, and Xin L\"u for their continuous encouragement and support, and Professor Xiaohang Wu for many helpful discussions. 
He is also grateful to Professor Jihao Liu for raising the question concerning the sharp stability threshold in the adjoint log canonical setting, which inspired the development of this work.

\end{document}